\newtheorem{theorem}{Theorem}[section]
\newtheorem{lemma}[theorem]{Lemma}
\newtheorem{prop}{Proposition}[section]
\newtheorem*{qcrit}{Q-Criterion}
\newtheorem{con}{Condition}
\newtheorem*{local}{Local Ergodic Theorem}
\newtheorem*{main}{Main Theorem}
\newtheorem*{abundance}{Abundance of sufficiently expanding points}
\newtheorem*{cs}{Chernov-Sinai ansatz}
\theoremstyle{definition}
\newtheorem{definition}[theorem]{Definition}
\numberwithin{equation}{section}
\begin{document}

\title[]{Certain systems of three falling balls satisfy the Chernov-Sinai Ansatz}

\author{Michael Hofbauer-Tsiflakos}
\address{University of Vienna\\
Faculty of Mathematics\\
Oskar Morgensternplatz 1\\
1090 Vienna.}
\email{michael.tsiflakos@univie.ac.at}

\begin{abstract}
The system of falling balls is an autonomous Hamiltonian system with
a smooth invariant measure and non-zero Lyapunov exponents almost everywhere. For
almost three decades now, the question of its ergodicity remains open. We contribute to
the solution of the ergodicity conjecture for three falling balls with a specific mass ratio
in the following two points: First, we prove the Chernov–Sinai ansatz. Second, we prove
that there is an abundance of sufficiently expanding points. It is of special interest that
for the aforementioned specific mass ratio, the configuration space can be unfolded to a
billiard table, where the proper alignment condition holds.
\end{abstract}

\date{}

\subjclass[2010]{Primary 37D50; Secondary 37J10}

\maketitle

\tableofcontents



\section{Introduction}
The system of falling balls was introduced by Wojtkowski \cite{W90a,W90b}. It describes the motion of 
$N \geq 2$ point masses, with positions $q_1, \ldots, q_N$, momenta $p_1, \ldots, p_N$,
and masses $m_1, \ldots, m_N$, moving up and down a vertical line and colliding elastically with 
each other. The bottom particle collides elastically with a
rigid floor placed at position $q_1 = 0$. The standing assumptions are 
$0 \leq q_1 \leq \ldots \leq q_N$ and $m_1 \geq \ldots \geq m_N$, $m_1 \neq m_N$.
For convenience, we will refer to the point particles as balls.
The system is an autonomous Hamiltonian system, with Hamiltonian given by the sum 
of the kinetic and linear potential energy of each ball. It possesses a smooth invariant measure with 
respect to the Hamiltonian flow and with respect to a
suitable Poincar\'e map $T$, describing the movement of the balls 
from one collision to the next. We denote the underlying Poincar\'e section for this map by 
$\mathcal{M}^+$ and its invariant measure by $\mu$.

One aspect that makes the investigation of the dynamics 
cumbersome is the presence of singularities. These are codimension one manifolds in phase space, 
on which the dynamics are not well-defined, in particular it has two different images. 
A point belongs to the singularity manifold, if its next collision is either between three balls or 
two balls with the floor. 

Dynamicists first tried to answer the question whether the system of $N$, $N \geq 2$, falling balls has 
$2N-2$ non-zero Lyapunov exponents on a positive measure set of the phase space. The exceptional two 
directions with a zero exponent are the direction of the flow and the directions transversal to the 
energy surface. 
Wojtkowski was able to prove, 
that two and three falling balls have non-zero Lyapunov exponents almost everywhere \cite{W90a}. 
He supplemented this result by proving that an arbitrary number of balls exposed to a certain 
family of non-linear potential fields have non-zero Lyapunov exponents almost everywhere \cite{W90b}. 
The most general result, regarding the linear potential field, 
is due to Sim\'anyi: For $N$, $N \geq 2$, falling balls, $\mu$-a.e. point $x 
\in \mathcal{M}^+$ has non-zero Lyapunov exponents \cite{S96}.    
In \cite{W98} Wojtkowski found an elegant way of proving the existence of non-zero Lyapunov 
exponents for a large class of falling balls systems. He first considers balls falling next to each 
other on a moving floor. By applying concrete stacking rules it is possible to obtain a 
variety of falling ball systems, such as the original one introduced in \cite{W90a} as a special case. 
The study of hyperbolicity is carried out by equivalently looking at the system of a particle falling 
in a wedge.

The underlying motivation of this work is to contribute to the solution of the long time open problem 
of ergodicity for three or more balls. For two balls, the system is already known to be ergodic 
\cite[p. 70 -72]{LW92}, provided $m_1 > m_2$. 
Since the system of three falling balls has non-zero Lyapunov exponents everywhere, the theory of 
Katok-Strelcyn \cite{KS86} yields, that the phase space partitions into at most countably many 
components on which the conditional smooth measure is ergodic. 
A reliable method to check the ergodicity of such systems is the local ergodic theorem 
\cite{CS87,KSSz90,LW92}. 
In the present work we will follow the local ergodic theorem version of Liverani and Wojtkoswki 
\cite{LW92}. 
For its application, the local ergodic theorem needs the following five conditions to hold, namely,
\begin{center}
\begin{enumerate}
 \item Chernov-Sinai ansatz,\\[-0.2cm]
 \item Non-contraction property,\\[-0.2cm]
 \item Continuity of Lagrangian subspaces,\\[-0.2cm]
 \item Regularity of singularity sets,\\[-0.2cm]
 \item Proper Alignment. 
\end{enumerate}
\end{center}
The validity of these conditions guarantees the existence of an open neighbourhood, around a point 
with non-vanishing Lyapunov exponents, that lies (mod 0) in one ergodic component. 
To prove, that there is only one ergodic component needs the validity 
of a transitivity argument. Namely, the set of points with a sufficient amount of expansion 
must have full measure and be arcwise connected. 
We will refer to this property as the abundance of sufficiently expanding points.
If the latter is true, one can build a chain of the aforementioned 
open neighbourhoods from any point with sufficient expansion to another. 
These neighbourhoods intersect pairwise on a subset of positive measure and, hence, there 
can only be one ergodic component. For three or more balls only condition 3 is known \cite{LW92} 
to be true. 

In their approach to ergodicity, Liverani and Wojtkowski introduced \cite{LW92} the property of 
(strict) unboundedness for a sequence of derivatives $(d_{T^nx}T)_{n \in \mathbb{N}}$. It roughly says, that 
the expansion (measured with respect to an indefinite quadratic form $Q$) of any vector from the contracting 
cone field goes to infinity. In their terminology, it follows immediately that if 
$(d_{T^nx}T)_{n \in \mathbb{N}}$ is strictly unbounded everywhere then the Chernov-Sinai ansatz 
holds. Additionally, the abundance of sufficiently expanding points follows as a simple corollary.

The proof of the strict unboundedness property for every phase point is the main task of this work 
(see Section 2 for more details). 
For this, we will partially use techniques introduced in \cite{W98}, which allow us to study the
system of falling balls as a particle falling in a wedge. The results obtained 
from the latter analysis will be used to slightly modify the approach to strict unboundedness 
in \cite{LW92} for our needs.

Another important issue, which we clarify in a separate subsection is the state of the proper 
alignment condition (see Subsection \ref{state}). 
By some experts it has been wrongly assumed not to hold. We will thoroughly explain that 
this condition can still be verified and is, thus, an open problem. Further, we 
will use the strict unboundedness property to analyze in Subsection \ref{iterates}, how the 
set of not properly aligned points behaves under sufficiently large iterates.
We point out that for a specific mass ratio the configuration space of the falling balls systems can be unfolded to 
a billiard table where the proper alignment condition holds (see Subsection \ref{special}). 
The latter was discovered by Wojtkowski \cite{W16}.

On the same subject, Chernov formulated \cite{Ch93}, in the realm of semi-dispersing billiards, a 
transversality condition, which can serve as a substitute for the proper alignment condition. 
We will show, that in the framework of symplectic maps, Chernov's 
transversality condition follows from the proper alignment condition (see Lemma \ref{equivalence}).

The paper is organized in the following way:

In Section \ref{2} we briefly summarize the main results of this paper, which are the strict unboundedness for 
every orbit, the Chernov-Sinai ansatz and the abundance of sufficiently expanding points. It will also 
be shown, that the latter two results follow at once from the strict unboundedness property of 
every orbit.

In Section \ref{3} we introduce the system of three falling balls.
 
In Section \ref{4} we recall the standard method for studying Lyapunov exponents 
in Hamiltonian systems \cite{W91} and recall what has been done for the system of falling balls so far.

In Section \ref{5} we explain the matter of ergodicity. It contains a detailed discussion of 
the local ergodic theorem, the proper alignment condition, Chernov's 
transversality condition and the abundance of sufficiently expanding points. 

In Section \ref{6} we begin with the first part of the proof of the strict unboundedness property. 
This section is completely written in the language of Liverani and Wojtkowski \cite{LW92} and 
explains how we use our new results in order to modify their proof of the unboundedness property. 

In Section \ref{7} we introduce the system of a particle falling in a three dimensional wedge 
from \cite{W98}. 
Its necessity stems from the fact, that for a special type of wedges this system is equivalent to the 
system of falling balls with particular masses. In the last subsection we will explain that the 
proper alignment condition is valid in these special wedges.

In Section 8 we utilize the results of Section 6 and 7 to complete the proof of the strict 
unboundedness property.

\section{Main results}\label{2}
Denote by $\mathcal{M}^+$ the phase space, which is partitioned ($\operatorname{mod} 0$) into subsets 
$\mathcal{M}_i^+$, $i = 1, 2, 3$, where 
each subset describes the moment right after collision of balls $i-1$ and $i$. For $i - 1 = 0$, we have a collision with the floor, i.e. $q_1 = 0$. The masses satisfy $m_1 > m_2 > m_3$ and the special relation given in (\ref{masses}). 
Let $T:\ \mathcal{M}^+ \circlearrowleft$ be the Poincar\'e map, describing the movement from one collision to the next. 
After applying Wojtkowski's convenient coordinate transformation $(q,p) \to (h,v) \to (\xi, \eta)$ 
(see (\ref{hv}), (\ref{xieta})) we get a contracting cone field
\begin{align}
\mathcal{C}(x) &= \{(\delta \xi, \delta \eta) \in \mathbb{R}^{3} \times \mathbb{R}^{3}:\ 
Q(\delta \xi, \delta \eta) > 0,\ \delta \xi_1 = 0,\ \delta \eta_1 = 0\} \cup \{\vec{0}\},\nonumber 
\end{align}
where $(\delta \xi, \delta \eta)$ denote the coordinates in tangent space. The cone field is defined by the quadratic form 
\begin{align}
Q(\delta \xi, \delta \eta) = \sum_{i = 1}^3 \delta \xi_i \delta \eta_i.\nonumber
\end{align}
Denote by $\overline{\mathcal{C}(x)}$ the closure of the cone $\mathcal{C}(x)$ and let 
$d_xT^n = d_{T^nx}T \ldots d_{Tx}T d_xT$. The sequence or, more accurately, derivatives along the orbit
\begin{align}
(d_{T^n x}T)_{n \in \mathbb{N}} = (d_xT, d_{Tx}T, d_{T^2x}T,\ldots),\nonumber 
\end{align}
is called unbounded, if
\begin{align}
\lim_{n \to +\infty} Q(d_{x}T^{n}v) = +\infty,\ \forall\ v \in \mathcal{C}(x) \setminus \{\vec{0}\},\nonumber
\end{align}
and strictly unbounded, if
\begin{align}
\lim_{n \to +\infty} Q(d_{x}T^{n}v) = +\infty,\ \forall\ v \in \overline{\mathcal{C}(x)} \setminus \{\vec{0}\}.\nonumber
\end{align}

\begin{main}\label{main}
	For every $x \in \mathcal{M}^+$, we have 
	\begin{align}
	\lim_{n \to + \infty}Q(d_xT^n(\delta \xi, \delta \eta)) = +\infty,\nonumber
	\end{align}
	for all $(\delta \xi, \delta \eta) \in \overline{\mathcal{C}(x)} \setminus \{\vec{0}\}$.
\end{main}
Due to Proposition 6.2 and Theorem 6.8 of \cite{LW92}, the Main Theorem also implies the strict 
unboundedness for the orbit in negative time $(d_{T^n x}T)_{n \in \mathbb{Z}^-}$ 
(with respect to the complementary cone field of $\mathcal{C}(x)$).

The singularity manifold on which $T$ resp. $T^{-1}$ is not well-defined is given by $\mathcal{S}^+$ 
resp. $\mathcal{S}^-$. Let $\mu_{\mathcal{S}^+}$ resp. $\mu_{\mathcal{S}^-}$ be the 
measures induced on the codimension one hypersurfaces $\mathcal{S}^+$ resp. $\mathcal{S}^-$, 
from the smooth $T$-invariant measure $\mu$.

The validity of the Main Theorem immediately establishes the Chernov-Sinai ansatz, which is one of the 
conditions of the Local Ergodic Theorem.
\begin{cs}
	For $\mu_{\mathcal{S}^-}$-a.e. $x \in \mathcal{S}^-$, we have
	\begin{align}
	\lim_{n \to +\infty} Q(d_xT^n(\delta \xi, \delta \eta)) = +\infty,\nonumber
	\end{align}
	for all $(\delta \xi, \delta \eta) \in \overline{\mathcal{C}(x)} \setminus \{\vec{0}\}$.
\end{cs}
The least expansion coefficient $\sigma$, for $n \geq 1$ and $x \in \mathcal{M}^+$, is defined as
\begin{align}
\sigma(d_{x}T^n) = \inf_{v \in \mathcal{C}(x)} \sqrt{\frac{Q(d_{x}T^nv)}{Q(v)}}.\nonumber
\end{align}
A point $x \in \mathcal{M}^+$, is called sufficiently expanding, if there 
exists $n = n(x) \geq 1$, such that $\sigma(d_{x}T^n) > 3$.

The last result is the abundance of sufficiently expanding points. 
It can be described as a transitivity argument, which acts in specifying the size of 
the ergodicity domain in phase space by connecting open neighbourhoods, which lie $(\operatorname{mod} 0)$ in one ergodic component. 
\begin{abundance}
The set of sufficiently expanding points has full measure and is arcwise connected.
\end{abundance}
The abundance of sufficiently expanding points follows at once from the Main
Theorem (see Theorem \ref{relation}) and the proper alignment property (see Subsection \ref{special}).
The validity of the latter guarantees that the set of double singular collisions is of
codimension two.

\section{The system of three falling balls}\label{3}
Let $q_{i} = q_{i}(t)$ be the position, $p_{i} = p_{i}(t)$ the momentum and $v_i = v_i(t)$ the velocity 
of the $i$-th ball. The balls are aligned on top of each other and are therefore confined to 
\begin{align}
\mathcal{N}(q,p) = \{(q,p) \in \mathbb{R}^3 \times \mathbb{R}^3:\ 0 \leq q_1 \leq q_2 \leq q_3\}.
\nonumber
\end{align} 
The momenta and the velocities are related by $p_i = m_iv_i$. We 
assume that the masses $m_{i}$ satisfy $m_{1} > m_{2} > m_{3}$. The movements of the balls are a 
result of a linear potential field and their kinetic energies. The total energy of the system is given 
by the Hamiltonian function
\begin{align}
H(q,p) = \sum_{i = 1}^{3} \frac{p_{i}^{2}}{2m_{i}} + m_{i}q_{i}.\nonumber
\end{align}

The Hamiltonian equations are
\begin{align}\label{equations}
\begin{array}{ccc}
\dot{q_{i}} & = & \dfrac{p_{i}}{m_{i}},\\[0.3cm]
\dot{p_{i}} & = & -m_{i}.
\end{array}
\end{align}

The dots indicate differentiation with respect to time $t$ and the Hamiltonian vector field on the 
right hand side will be denoted as $X_{H}(q,p)$. The solutions to these equations are
\begin{align}\label{equation2}
\begin{array}{ccl}
q_{i}(t) & = & -\dfrac{t^{2}}{2} + t\dfrac{p_{i}(0)}{m_{i}} + q_{i}(0),\\[0.3cm]
p_{i}(t) & = & -tm_{i} + p_{i}(0),
\end{array}
\end{align}
which form parabolas in $(t, q_{i}(t)) \subset \mathbb{R} \times \mathbb{R}_{+}$. It is clear from the 
choice of the linear potential field, that the acceleration of each ball points downwards and, thus, 
these parabolas cannot escape to infinity. Hence, for every initial condition $(q,p)$ the balls go 
through every collision in finite time and, thus, every collision happens infinitely often. The energy 
manifold $E_c$ and its tangent space $\mathcal{T}E_c$ are given by 
\begin{align} 
E_c &= \{(q,p) \in \mathbb{R}_{+}^{3} \times \mathbb{R}^{3}:\ H(q,p) =  \sum_{i = 1}^{3} 
\frac{p_{i}^{2}}{2m_{i}} + m_{i}q_{i} = c\},\nonumber\\
\mathcal{T}_{(q,p)}E_c &= \{(\delta q, \delta p) \in \mathbb{R}^{3} \times \mathbb{R}^{3}:\   
d_{(q,p)}H(\delta q, \delta p) 
= \sum_{i=1}^{3}\dfrac{p_{i}\delta p_{i}}{m_{i}} + m_{i}\delta q_{i} = 0\}.\nonumber
\end{align}
Including the restriction of the balls positions amounts to $E_c \cap \mathcal{N}(q,p)$.

The Hamiltonian vector field (\ref{equations}) gives rise to the Hamiltonian flow 
\begin{align}
 \phi:\ &\mathbb{R} \times E_c \cap \mathcal{N}(q,p) \to E_c \cap \mathcal{N}(q,p), \nonumber\\
 &(t,(q,p)) \mapsto \phi(t,(q,p)).\nonumber 
\end{align}
For convenience, the image will also be written with the time variable as superscript, i.e. 
$\phi(t,(q,p)) = \phi^{t}(q,p)$.\\
The standard symplectic form $\omega = \sum_{i = 1}^3 dq_i \wedge dp_i$ induces the symplectic volume 
element $\Omega = \bigwedge_{i = 1}^3 dq_i \wedge dp_i$. The volume element on the 
energy surface is obtained by contracting $\Omega$, by a vector $u$, where $u$ is any 
vector satisfying $dH(u)=1$. Denoting the contraction operator by $\iota$, the 
volume element on the energy surface is given by $\iota (u) \Omega$. Since the flow preserves 
the standard symplectic form, it preserves the volume element and, hence, the Liouville measure $\nu$ 
on $E_c \cap \mathcal{N}(q,p)$ obtained from it.
We define the Poincar\'e section, which describes the states right after a collision as
$\mathcal{M}^{+} = \mathcal{M}_{1}^{+} \cup \mathcal{M}_{2}^{+} \cup \mathcal{M}_{3}^{+}$, with
\begin{align}
&\mathcal{M}_{1}^{+} := \{(q,p) \in E_c \cap \mathcal{N}(q,p): q_{1} = 0,\ p_{1}/m_{1} \geq 0\},
\nonumber\\
&\mathcal{M}_{i}^{+} := \{(q,p) \in E_c \cap \mathcal{N}(q,p): q_{i-1} = q_{i},\ 
p_{i-1}/m_{i-1} \leq p_{i}/m_{i}\},\ i = 2,3.\nonumber
\end{align}

In the same way we define the set of states right before collision $\mathcal{M}^{-} = 
\mathcal{M}_{1}^{-} \cup \mathcal{M}_{2}^{-} \cup \mathcal{M}_{3}^{-}$, by
\begin{align}
&\mathcal{M}_{1}^{-} := \{(q,p) \in E_c \cap \mathcal{N}(q,p): q_{1} = 0,\ p_{1}/m_{1} < 0\},\nonumber\\
&\mathcal{M}_{i}^{-} := \{(q,p) \in E_c \cap \mathcal{N}(q,p): q_{i-1} = q_{i},\ 
p_{i-1}/m_{i-1} > p_{i}/m_{i}\},\ i = 2,3.\nonumber
\end{align}

The '+' resp. '-' superscript refer to the states right after resp. before collision. 
The system of falling balls is considered as a hard ball system with fully elastic collisions. During a 
collision of the balls $i$ and $i+1$ the momenta resp. velocities change according to
\begin{align}\label{collisionqp}
\begin{array}{ccc}
p_{i}^{+} & = & \gamma_{i} p_{i}^{-} + (1 + \gamma_{i})p_{i+1}^{-},\\
p_{i+1}^{+} & = & (1 - \gamma_{i})p_{i}^{-} - \gamma_{i} p_{i+1}^{-},\\[0.1cm]
v_{i}^{+} & = & \gamma_{i} v_{i}^{-} + (1 - \gamma_{i})v_{i+1}^{-},\\
v_{i+1}^{+} & = & (1 + \gamma_{i})v_{i}^{-} - \gamma_{i} v_{i+1}^{-},
\end{array}
\end{align}
where $\gamma_{i} = (m_{i} - m_{i+1})/(m_{i} + m_{i+1})$, $i = 1, 2$, and when the bottom particle 
collides with the floor the sign of its momentum resp. velocity is simply reversed
\begin{align}\label{v1p1}
\begin{array}{ccc}
p_{1}^{+} = -p_{1}^{-},\\
v_1^+ = -v_1^-.
\end{array}
\end{align}

These collision laws are described by the linear collision map 
\begin{align}
\begin{array}{rl}
 \Phi_{i-1,i}:\ &\mathcal{M}^{-} \to \mathcal{M}^{+},\nonumber\\[0.2cm]
 &(q,p^{-}) \mapsto (q,p^{+}).\nonumber
\end{array}
\end{align}

We will write $\Phi$ if we do not want to refer to any specific collision. 
Let $\tau: M \to \mathbb{R}_{+}$ be the first return time to $\mathcal{M}^{-}$. 
We define the Poincar\'e map as 
\begin{align}
T:\  &\mathcal{M}^{+} \to \mathcal{M}^{+},\nonumber \\
&(q,p) \mapsto \Phi \circ \phi^{\tau(q,p)}(q,p).\nonumber
\end{align}

$T$ is the collision map, that maps from one collision to the next. On $\mathcal{M}^+$, we 
obtain the volume element $\iota(X_H) \iota (u) \Omega$, by contracting the volume element 
$\iota (u) \Omega$ on the energy surface with respect to the direction of the flow $X_H$.
This exterior form defines a smooth measure $\mu$ on $\mathcal{M}^+$, which is $T$-invariant.
Our dynamical system can be stated as the triple $(\mathcal{M}^{+}, T, \mu)$. 
Matching the present state with the next collision in the future (’+’) resp. the
past (’-’), we obtain two (mod 0) partitions of $\mathcal{M}^+$ with elements
\begin{align}
&\mathcal{M}_{1,1}^{\pm} = \{x \in \mathcal{M}_{1}^{+}:\ T^{\pm 1}x \in \mathcal{M}_{1}^{+}\},\nonumber\\
&\mathcal{M}_{i,j}^{\pm} = \{x \in \mathcal{M}_{i}^{+}:\  T^{\pm 1} x \in \mathcal{M}_{j}^{+}\},\ 
i,j \in \{1,2,3\},\ j \neq i.\nonumber
\end{align}

It can be calculated, that $\mu(\mathcal{M}_{i,j}^{\pm}) > 0$.   
The system of falling balls possesses codimension one singularity manifolds
\begin{align}
 &\mathcal{S}_{1,2}^{+} = \mathcal{M}_{1,2}^+ 
\cap \mathcal{M}_{1,3}^+,\quad 
 \mathcal{S}_{1,2}^{-} = \mathcal{M}_{2,1}^- 
\cap \mathcal{M}_{3,1}^-,\nonumber\\
&\mathcal{S}_{1,1}^{+} = \mathcal{M}_{1,1}^+ 
\cap \mathcal{M}_{1,2}^+,\quad 
\mathcal{S}_{1,1}^{-} = \mathcal{M}_{1,1}^- 
\cap \mathcal{M}_{2,1}^-,\nonumber\\
&\mathcal{S}_{3,1}^{+} = \mathcal{M}_{3,1}^+ 
\cap \mathcal{M}_{3,2}^+,\quad 
\mathcal{S}_{3,1}^{-} = \mathcal{M}_{1,3}^- 
\cap \mathcal{M}_{2,3}^-.\nonumber
\end{align}

The states in $\mathcal{S}_{1,2}^{\pm}$ face a triple collision next, while the
states in $\mathcal{S}_{1,1}^{\pm}, \mathcal{S}_{3,1}^{\pm}$ experience a collision of the lower two 
balls with the floor next. 
The maps $T$ resp. $T^{-1}$ are not well-defined on the sets 
$\mathcal{S}_{1,1}^{+}, \mathcal{S}_{1,2}^{+}, \mathcal{S}_{3,1}^{+}$ resp. $\mathcal{S}_{1,1}^{-}, 
\mathcal{S}_{1,2}^{-}, \mathcal{S}_{3,1}^{-}$, 
because they have two different images. This happens because the compositions $\Phi_{0,1} \circ \Phi_{1,2}$ 
and $\Phi_{1,2} \circ \Phi_{2,3}$ do not commute. When the trajectory hits a singularity, we will continue the system 
on both branches separately. In this way, the results obtained in this work hold for every point. 

We abbreviate 
\begin{align}
&\mathcal{S}^{\pm} = \mathcal{S}_{1,1}^{\pm} \cup \mathcal{S}_{1,2}^{\pm} \cup \mathcal{S}_{3,1}^{\pm},\nonumber\\
&\mathcal{S}_n^{\pm} = \mathcal{S}^{\pm} \cup T^{\mp 1}\mathcal{S}^{\pm} \cup \ldots 
\cup T^{\mp (n-1)}\mathcal{S}^{\pm}.\nonumber
\end{align}

\section{Lyapunov exponents}\label{4}
We subject our system to two well-discussed coordinate transformations $(q,p) \to (h,v) \to (\xi, \eta)$ 
introduced in \cite{W90a}. The first one is given by 
\begin{align}\label{hv}
\begin{aligned}
h_{i} &= \dfrac{p_{i}^{2}}{2m_{i}} + m_{i}q_{i},\quad v_{i} &= \dfrac{p_{i}}{m_{i}},
\end{aligned}
\end{align}
while the second one is a linear coordinate transformation
\begin{align}\label{xieta}
\begin{aligned}
(\xi, \eta) &= (A^{-1}h, A^{T}v),
\end{aligned}
\end{align}
where $A$ is an invertible matrix depending only on the masses $m_i$ \cite[p. 520]{W90a}. The energy 
manifold and its tangent space take the form
\begin{align}
E_c &= \{(\xi, \eta) \in \mathbb{R}^{3} \times \mathbb{R}^{3}:\ H(\xi,\eta) = \xi_{1} = c\},\nonumber\\
\mathcal{T}E_c &= \{(\delta \xi, \delta \eta) \in \mathbb{R}^{3} \times \mathbb{R}^{3}:\  dH(\delta \xi,\delta \eta) = \delta \xi_{1} = 0\}.\nonumber
\end{align}

The Hamiltonian vector field $X_{H}(\xi,\eta) = (0,0,0,-1,0,0)$ becomes constant. 
In these coordinates, the derivative of the flow $d\phi^t$ equals the identity map. 
Thus, only the derivatives of the collision maps $d\Phi_{i-1,i}$ are relevant to the 
dynamics in tangent space. In these coordinates the derivatives of the collision maps are given by 
\begin{align}
d\Phi_{0,1} = \nonumber
\begin{pmatrix}
\operatorname{id}_3 & 0\\
B & \operatorname{id}_3
\end{pmatrix},\
d\Phi_{1,2} = \nonumber
 \begin{pmatrix}
M_1 & U_1\\
0 & M_1^T
\end{pmatrix},\
d\Phi_{2,3} = \nonumber
\begin{pmatrix}
M_2 & U_2\\
0 & M_2^T
\end{pmatrix}.
\end{align}
where 
\begin{align}
B &= 
 \begin{pmatrix}
  1 & 0 & 0\\
  0 & \beta & 0\\
  0 & 0 & 0
 \end{pmatrix},\
U_1 = 
 \begin{pmatrix}
  0 & 0 & 0\\
  0 & -\alpha_1 & 0\\
  0 & 0 & 0
 \end{pmatrix},\ 
U_2 = 
 \begin{pmatrix}
  0 & 0 & 0\\
  0 & 0 & 0\\
  0 & 0 & -\alpha_2
 \end{pmatrix},\nonumber\\
\operatorname{id}_3 &=
  \begin{pmatrix}
 1 & 0 & 0\\
 0 & 1 & 0\\
 0 & 0 & 1
 \end{pmatrix},\
 M_1 =
  \begin{pmatrix}
 1 & 0 & 0\\
 0 & -1 & 1+\gamma_{1}\\
 0 & 0 & 1
 \end{pmatrix},\
 M_2 =
  \begin{pmatrix}
 1 & 0 & 0\\
 0 & 1 & 0\\
 0 & 1 - \gamma_{2} & -1
 \end{pmatrix}.\nonumber
\end{align}

The terms in the matrices are given by
\begin{align}\label{alpha}
\beta = -\dfrac{2}{m_{1}v_{1}^{-}},\quad \alpha_{i} = 
\dfrac{2m_{i}m_{i+1}(m_{i} - m_{i+1})(v_{i}^{-} - v_{i+1}^{-})}{(m_{i} + m_{i+1})^{2}}.
\end{align}

A Lagrangian subspace $V$ is a linear space of maximal dimension on which the symplectic form vanishes. 
In general, every vector $v \in \mathbb{R}^6$ can be uniquely decomposed by a pair of two 
given transversal Lagrangrian subspaces $(V_1$, $V_2)$, i.e. $v = v_1 + v_2$, $v_{i} \in V_{i}$, 
$i = 1,2$. For a pair of transversal Lagrangian subspaces $(V_1, V_2)$ we can define 
a quadratic form $Q$ by
\begin{align}
Q:\ &\mathbb{R}^{6} \to \mathbb{R}\nonumber\\
&v \mapsto Q(v) = \omega(v_{1},v_{2})\nonumber
\end{align}

The canonical pair of transversal Lagrangian subspaces in $\mathbb{R}^{6}$ is given by
\begin{align}
W_{1} = \{(\delta \xi, \delta \eta) \in \mathbb{R}^{3} \times \mathbb{R}^{3}:\ 
\delta \eta_{1} = \delta \eta_{2} = \delta \eta_{3} = 0\},\nonumber\\
W_{2} = \{(\delta \xi, \delta \eta) \in \mathbb{R}^{3} \times \mathbb{R}^{3}:\ 
\delta \xi_{1} = \delta \xi_{2} = \delta \xi_{3} = 0\}.\nonumber 
\end{align}

Restricting both to $\mathcal{T}E$ and excluding the direction of the flow gives
\begin{align}
\begin{array}{rl}\label{constant}
&L_{1} = \{(\delta \xi, \delta \eta) \in \mathbb{R}^{3} \times \mathbb{R}^{3}:\ 
\delta \xi_{1} = 0,\ \delta \eta_{i} = 0,\ i=1,2,3\},\\[0.2cm]
&L_{2} = \{(\delta \xi, \delta \eta) \in \mathbb{R}^{3} \times \mathbb{R}^{3}:\ 
\delta \eta_{1} = 0,\ \delta \xi_{i} = 0,\ i=1,2,3\}.
\end{array}
\end{align}

For the pair $(L_1, L_2)$, the quadratic form $Q$ becomes the Euclidean inner product
\begin{align}
 Q(\delta \xi, \delta \eta) = \langle \delta \xi, \delta \eta \rangle.\nonumber
\end{align}

We see immediately that $Q(L_{i}) = 0$. Also, $Q$ is continuous and homogeneous of degree two.
Using the quadratic form $Q$ we can define the open cones
\begin{align}
\mathcal{C}(x) &= \{(\delta \xi, \delta \eta) \in L_1 \oplus L_2:\ 
Q(\delta \xi, \delta \eta) > 0\} \cup \{\vec{0}\},\nonumber\\
\mathcal{C}^{\prime}(x) &= \{(\delta \xi, \delta \eta) \in L_1 \oplus L_2:\ 
Q(\delta \xi, \delta \eta) < 0\} \cup \{\vec{0}\}.\nonumber
\end{align}

Denote by $\overline{\mathcal{C}(x)}$ the closure of the cone $\mathcal{C}(x)$.
\begin{definition}\label{Defq}
1. The cone field $\{\mathcal{C}(x),\ x \in \mathcal{M}^{+}\}$, is called 
 invariant for $x \in \mathcal{M}^+$, if 
 \begin{align}
 d_xT \overline{\mathcal{C}(x)} \subseteq \overline{\mathcal{C}(Tx)},\nonumber
 \end{align}
2. The cone field $\{\mathcal{C}(x),\ x \in \mathcal{M}^{+}\}$, is called 
 eventually strictly invariant for $x \in \mathcal{M}^+$, if there exists
a $k \geq 1$, such that 
\begin{align}
 d_xT^k \overline{\mathcal{C}(x)} \subset \mathcal{C}(T^kx).\nonumber
\end{align}
3. The monodromy map $d_xT$ is called $Q$-monotone for $x \in \mathcal{M}^{+}$, if 
\begin{align}
Q(d_xT(\delta \xi, \delta \eta)) \geq Q(\delta \xi, \delta \eta), \nonumber
\end{align}
for all $(\delta \xi, \delta \eta) \in L_1 \oplus L_2$.\\
4. The monodromy map $d_xT$ is called eventually strictly $Q$-monotone for $x \in \mathcal{M}^{+}$, 
if there exists a $k \geq 1$, such that 
\begin{align}
Q(d_xT^k(\delta \xi, \delta \eta)) > Q(\delta \xi, \delta \eta), \nonumber
\end{align}
for all $(\delta \xi, \delta \eta) \in L_1 \oplus L_2 \setminus \{\vec{0}\}$.
\end{definition}

Statement \textit{1.} resp. \textit{2.} is equivalent to statement \textit{3.} resp. \textit{4.} 
(see e.g. \cite[Theorem 4.1]{LW92}). The 
following lemma establishes eventual strict $Q$-monotonicity by using only the 
evolution of the Lagrangian subspaces $L_1$ and $L_2$
(see e.g. \cite[Lemma 2]{W90a}).
\begin{lemma}\label{lem}
The monodromy map $d_xT$ is eventually strictly $Q$-monotone for 
$x \in \mathcal{M}^{+}$, if there exists $k \geq 1$, such that for all 
$(\delta \xi, 0) \in L_1$ and $(0, \delta \eta) \in L_2$,
\begin{align}
Q(d_xT^k (\delta \xi, 0)) > 0\ \text{ and }\ Q(d_xT^k (0, \delta \eta)) > 0.\nonumber
\end{align}
\end{lemma}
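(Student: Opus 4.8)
The plan is to prove Lemma \ref{lem} by reducing eventual strict $Q$-monotonicity to the stated positivity condition on the images of the two canonical Lagrangian subspaces $L_1$ and $L_2$. The key structural fact is that $Q$ is a quadratic form of signature $(3,3)$ whose maximal positive-definite (and maximal negative-definite) subspaces have dimension $3$, and that $L_1,L_2$ are complementary null subspaces for $Q$. Since $d_xT$ is symplectic, each $d_xT^k$ preserves $\omega$, and hence maps a Lagrangian subspace to a Lagrangian subspace; in particular $d_xT^kL_1$ and $d_xT^kL_2$ are again Lagrangian, i.e. $3$-dimensional and $Q$-null in the sense that $\omega$ restricted to each vanishes — but this does \emph{not} mean $Q$ vanishes on them, because $Q$ is defined via the splitting along $(L_1,L_2)$, not along the image pair. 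The content of the hypothesis is precisely that $Q$ becomes \emph{strictly positive} on the nonzero vectors of both $d_xT^kL_1$ and $d_xT^kL_2$.

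The main steps I would carry out: First, fix $k$ as in the hypothesis. Write an arbitrary nonzero $v \in L_1 \oplus L_2 = \mathcal{T}E_c$ (modulo flow) as $v = v_1 + v_2$ with $v_i \in L_i$. Then $d_xT^k v = d_xT^k v_1 + d_xT^k v_2$, and by bilinearity of the symmetric form $B_Q$ associated to $Q$ (so that $Q(u+w) = Q(u) + 2B_Q(u,w) + Q(w)$) we get
\begin{align}
Q(d_xT^k v) = Q(d_xT^k v_1) + 2B_Q(d_xT^k v_1, d_xT^k v_2) + Q(d_xT^k v_2).\nonumber
\end{align}
Second, I would show the cross term is controlled: because $d_xT^k$ is $Q$-monotone — or, if monotonicity at each step is not yet available, because $\omega$ is preserved — the quantity $2B_Q(d_xT^kv_1, d_xT^kv_2)$ equals $\omega$ evaluated on the $(L_1,L_2)$-components of $d_xT^kv_1$ and $d_xT^kv_2$; I would express $2B_Q(v_1,v_2) = \omega(v_1,v_2) = Q(v)$ for the original vector and compare. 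The cleanest route is the known equivalence (cited as \cite[Theorem 4.1]{LW92}) that $Q$-monotonicity of $d_xT$ is automatic here once the cone field is invariant, so that $Q(d_xT^k v) \geq Q(v)$ already, and the cross term plus the original ``diagonal'' contribution reproduces at least $Q(v)$; then the hypothesis $Q(d_xT^kv_1)>0$ and $Q(d_xT^kv_2)>0$ upgrades the inequality to strict whenever $v_1 \neq 0$ or $v_2 \neq 0$, i.e. whenever $v \neq 0$.

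Third, I would handle the degenerate cases: if $v \in L_1$ (so $v_2 = 0$) then $Q(d_xT^kv) = Q(d_xT^kv_1) > 0 \geq Q(v) = 0$ directly, and symmetrically for $v \in L_2$; the only subtlety is the genuinely mixed case, which the bilinear expansion plus monotonicity resolves. I expect the main obstacle to be making the cross-term estimate rigorous without circularity: one must be careful that ``$Q$-monotone'' is already known (it is, given cone invariance, which holds for this system), and that the expansion of $Q$ on the image respects the \emph{fixed} reference splitting $(L_1,L_2)$ rather than drifting to the image splitting. Once that bookkeeping is pinned down — essentially tracking that $Q(d_xT^k v) - Q(v)$ is a sum of the two nonnegative-by-hypothesis terms $Q(d_xT^kv_i)$ minus $Q(v_i) = 0$, after using symplecticity to cancel the cross terms against $Q(v)$ — the strict inequality for all nonzero $v$ follows, which by Definition \ref{Defq} and the cited equivalence is exactly eventual strict $Q$-monotonicity of $d_xT$.
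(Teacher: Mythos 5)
Your overall framing is right — you correctly identify that the known $Q$-monotonicity of $d_xT$ (Wojtkowski) must be combined with the hypothesis, and the cases $v \in L_1$ or $v \in L_2$ are indeed immediate — but the step that is supposed to do the real work fails. You claim that "using symplecticity to cancel the cross terms against $Q(v)$" leaves $Q(d_xT^k v) - Q(v)$ as the sum of the two positive terms $Q(d_xT^k v_1) + Q(d_xT^k v_2)$. That identity is false. Writing $N = d_xT^k$ in block form $\begin{pmatrix} A & B\\ C & D\end{pmatrix}$ with respect to $L_1 \oplus L_2$ and $v = (\delta\xi,\delta\eta)$, symplecticity gives $A^TC$ and $B^TD$ symmetric and $A^TD - C^TB = \operatorname{id}$, and a direct computation yields $Q(Nv) - Q(v) = \langle A^TC\,\delta\xi,\delta\xi\rangle + \langle B^TD\,\delta\eta,\delta\eta\rangle + 2\langle \delta\xi, C^TB\,\delta\eta\rangle$. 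The hypothesis controls only the two diagonal terms; the mixed term $2\langle\delta\xi, C^TB\,\delta\eta\rangle$ does not cancel (it would vanish only if $C^TB = 0$) and has no definite sign. The source of the error is that the polar form of $Q$ satisfies $2B_Q(u,w) = \omega(u_1,w_2) + \omega(w_1,u_2)$, whereas $\omega(u,w) = \omega(u_1,w_2) - \omega(w_1,u_2)$, so invariance of $\omega$ does not transfer to $B_Q$. Nor can the soft fallback close the argument: a positive semidefinite form (which is what monotonicity gives for the defect $Q(N\cdot)-Q(\cdot)$) that is strictly positive on $L_1\setminus\{0\}$ and $L_2\setminus\{0\}$ need not be positive definite — the form $(a-b)^2$ on $\mathbb{R}^2$ is positive on both axes and degenerate — and nothing in your two-plane bookkeeping (monotonicity plus positivity of $Q(Nv_1)$, $Q(Nv_2)$ restricted to $\operatorname{span}(v_1,v_2)$) rules out exactly this degenerate profile; one can check it is consistent with all the constraints you use.

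What is missing is a genuinely global symplectic input, and note that the paper itself gives no proof either — it simply cites Lemma 2 of \cite{W90a} — so supplying this ingredient was the whole task. One clean way: the hypothesis forces $B$ and $C$ to be injective, hence invertible (if $B\delta\eta = 0$ then $Q(N(0,\delta\eta)) = \langle B\delta\eta, D\delta\eta\rangle = 0$, and similarly for $C$); and the symmetric matrix of the defect form is exactly $\begin{pmatrix} C^TA & C^TB\\ B^TC & B^TD\end{pmatrix} = \operatorname{diag}(C^T,B^T)\,N$, whose determinant is $\det B\,\det C\,\det N \neq 0$. A nondegenerate positive semidefinite form is positive definite, so $d_xT^k$ itself is strictly $Q$-monotone, which is the assertion. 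Alternatively one can pass through strict invariance of the cone and invoke the equivalence of statements 2 and 4 in Definition \ref{Defq} (\cite[Theorem 4.1]{LW92}); either way, the decisive step uses invertibility of the off-diagonal blocks (equivalently, transversality of $NL_1, NL_2$ to $L_1, L_2$), not a cancellation of cross terms.
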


In order to get non-zero Lyapunov exponents Wojtkowski introduced \cite[p. 516]{W90a} a 
criterion, which links eventual strict $Q$-monotonicity to nonuniform hyperbolic 
behaviour
\begin{qcrit}
If $d_xT$ is eventually strictly $Q$-monotone for $\mu$-a.e. $x \in \mathcal{M}^{+}$, then 
all Lyapunov exponents, except for two\footnote{The exceptional directions with zero Lyapunov exponents 
are the direction of the flow and the ones contained in the subset $\{v:\ dH(v) \neq 0\}$.}, are non-zero.
\end{qcrit}

For $N$, $N \geq 2$, balls, Wojtkowski proved \cite{W90a}, that $d_xT$ is $Q$-monotone for every point in 
$\mathcal{M}^+$. Wojtkowski strengthened this statement in the case of three balls with upward decreasing masses, 
by proving eventual strict $Q$-monotonicity for every
\footnote{Even though Proposition 3 in \cite{W90a} is stated for almost every point 
$x \in \mathcal{M}^+$, the reader will discover, when carefully reading the proof, that it 
actually holds for every $x \in \mathcal{M}^+$.} point in $\mathcal{M}^+$ 
\cite[Proposition 3]{W90a}. 
Afterwards Sim\'anyi proved \cite{S96}, that $d_xT$ is eventually strictly $Q$-monotone for 
$\mu$-a.e. $x \in \mathcal{M}^+$ and an arbitrary number of balls.\\ 
We close this subsection by formulating the (strict) unboundedness property and the least expansion coefficient, which will be used to establish criteria for ergodicity.\\ 
The least expansion coefficient $\sigma$, for $n \geq 1$ and $x \in \mathcal{M}^+$, is defined as
\begin{align}\label{leastexp}
\sigma(d_{x}T^n) = \inf_{v \in \mathcal{C}(x)} \sqrt{\frac{Q(d_{x}T^nv)}{Q(v)}}.
\end{align}

\begin{definition}\label{unboundedness}
1. The sequence $(d_{T^n x}T)_{n \in \mathbb{N}}$ is called unbounded, if
\begin{align}
\lim_{n \to +\infty} Q(d_{x}T^{n}v) = +\infty,\ \forall\ v \in \mathcal{C}(x) \setminus \{\vec{0}\}.\nonumber
\end{align}

2. The sequence $(d_{T^n x}T)_{n \in \mathbb{N}}$ is called strictly unbounded, if
\begin{align}
\lim_{n \to +\infty} Q(d_{x}T^{n}v) = +\infty,\ \forall\ v \in \overline{\mathcal{C}(x)} 
\setminus \{\vec{0}\}.\nonumber
\end{align}
\end{definition}
The least expansion coefficient and the property of strict unboundedness relate to each 
other in the following way
\begin{theorem}[Theorem 6.8, \cite{LW92}]\label{relation}
The sequence $(d_{T^nx}T)_{n\in\mathbb{N}}$ is strictly unbounded if and only if 
$\lim_{n \to +\infty} \sigma(d_xT^n) = +\infty$.
\end{theorem}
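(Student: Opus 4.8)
\textbf{Proof plan for Theorem \ref{relation}.}

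The plan is to prove both implications by carefully translating the cone-expansion condition into the behaviour of $\sigma(d_xT^n)$ on the full closed cone $\overline{\mathcal{C}(x)}$, using homogeneity of $Q$ and a compactness argument on a cross-section of the cone. First I would fix the cross-section $K(x) = \{v \in \overline{\mathcal{C}(x)}:\ \|v\| = 1\}$, which is compact, and note that since $Q$ is continuous and homogeneous of degree two, the expansion of a vector $d_xT^n v$ is controlled by its behaviour on $K(x)$; the subtlety is that $Q$ vanishes on the boundary of the cone (on $L_1$ and $L_2$), so $\sigma(d_xT^n)$, defined as an infimum over the \emph{open} cone with the ratio $Q(d_xT^nv)/Q(v)$, is not obviously related to uniform expansion on the \emph{closed} cone where the ratio degenerates $0/0$.

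For the direction ``strict unboundedness $\Rightarrow \sigma(d_xT^n) \to +\infty$'': suppose for contradiction that $\sigma(d_xT^n) \not\to +\infty$, so along a subsequence $n_k$ there are unit vectors $v_k \in \mathcal{C}(x)$ with $Q(d_xT^{n_k}v_k)/Q(v_k) \le C$. By compactness of $K(x)$, pass to a further subsequence with $v_k \to v_* \in \overline{\mathcal{C}(x)}$, $\|v_*\|=1$. Here I expect to need the key structural fact, available from the earlier material, that $d_xT$ is $Q$-monotone for every point (Wojtkowski's result quoted after the Q-Criterion): this forces $n \mapsto Q(d_xT^nv)$ to be nondecreasing for each fixed $v$, which lets one pass the bound on the ratio to a bound on $Q(d_xT^Nv_k)$ for fixed $N \le n_k$, namely $Q(d_xT^Nv_k) \le Q(d_xT^{n_k}v_k) \le C\, Q(v_k) \le C'$ (the last step using that $Q$ is bounded on $K(x)$). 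Letting $k \to \infty$ with $N$ fixed and using continuity of $Q$ and of $d_xT^N$ gives $Q(d_xT^Nv_*) \le C'$ for all $N$, so $\limsup_N Q(d_xT^Nv_*) < +\infty$; if $v_* \neq \vec 0$ this contradicts strict unboundedness applied to $v_* \in \overline{\mathcal{C}(x)}\setminus\{\vec 0\}$. Since $\|v_*\| = 1 \neq 0$, we are done. (One must also check $v_* \in \overline{\mathcal{C}(x)}$, which is immediate because $\overline{\mathcal{C}(x)}$ is closed.)

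For the converse ``$\sigma(d_xT^n) \to +\infty \Rightarrow$ strict unboundedness'': take any $v \in \overline{\mathcal{C}(x)}\setminus\{\vec 0\}$; we must show $Q(d_xT^nv) \to +\infty$. If $v$ lies in the open cone this is nearly immediate from $Q(d_xT^nv) \ge \sigma(d_xT^n)^2\, Q(v)$ and $Q(v) > 0$. The genuine content is the boundary case $Q(v) = 0$, where this inequality is vacuous. Here the plan is to perturb: approximate $v$ by $v_\varepsilon = v + \varepsilon w$ with $w$ a fixed interior vector, so $v_\varepsilon$ lies in the open cone for every $\varepsilon > 0$; then $Q(d_xT^nv_\varepsilon) \ge \sigma(d_xT^n)^2 Q(v_\varepsilon)$. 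Expanding $Q(d_xT^n v_\varepsilon)$ bilinearly in $\varepsilon$ and using $Q$-monotonicity together with the fact that $d_xT^n$ preserves $\overline{\mathcal{C}(x)}$ (so $d_xT^nv$ and $d_xT^nw$ both sit in the image cone, where the associated bilinear form is nonnegative on the cone), one controls the cross term and obtains a lower bound for $Q(d_xT^nv)$ itself that still tends to infinity; sending $\varepsilon \to 0$ after $n \to \infty$, with the order of limits handled via the monotone bound $Q(d_xT^{n}v) \ge Q(d_xT^{N}v)$ for $n \ge N$, finishes it. The main obstacle throughout is precisely this boundary degeneracy of $Q$: both directions hinge on leveraging $Q$-monotonicity (nondecreasing $Q$ along orbits) to trade statements about the open cone for statements about its closure, and getting the two limits ($n \to \infty$ and $\varepsilon \to 0$, resp. the subsequential limit $v_k \to v_*$) to commute. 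Since this theorem is quoted verbatim from \cite{LW92}, I would in practice cite that reference for the full details and only sketch the above.
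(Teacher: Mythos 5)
First, note that the paper itself does not prove this statement: it is imported verbatim from \cite{LW92} (Theorem 6.8), so the only comparison available is with your blind sketch. Your first direction (strict unboundedness $\Rightarrow \sigma(d_xT^n)\to+\infty$) is essentially correct: taking near-minimizing unit vectors $v_k\in\mathcal{C}(x)$, using $Q$-monotonicity of every $d_yT$ along the orbit to transfer the bound $Q(d_xT^{n_k}v_k)\le C\,Q(v_k)$ down to every fixed power $N\le n_k$, and then passing to a limit $v_k\to v_*\in\overline{\mathcal{C}(x)}$, $\|v_*\|=1$, by compactness and continuity of $Q$ and of $d_xT^N$, does yield a nonzero vector of the closed cone along which $Q$ stays bounded, contradicting strict unboundedness.

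The converse, however, has a genuine gap exactly at the point you identify as "the genuine content", the boundary case $Q(v)=0$. Two problems. (a) Your supporting claim that the symmetric bilinear form $B$ associated with $Q$ is nonnegative on the cone is false: $\mathcal{C}(x)$ is invariant under $w\mapsto -w$, so for any $v$ and any interior $w$ with $B(v,w)\neq 0$ one of $\pm w$ is an interior vector with $B(v,\mp w)<0$; consequently $v_\varepsilon=v+\varepsilon w$ need not lie in $\mathcal{C}(x)$ at all (this is fixable by replacing $w$ by $-w$, but it shows the structural fact you lean on is not there). (b) More seriously, even with $B(v,w)\ge 0$, expanding the inequality $Q(d_xT^nv_\varepsilon)\ge\sigma(d_xT^n)^2Q(v_\varepsilon)$ gives $Q(d_xT^nv)+2\varepsilon B(d_xT^nv,d_xT^nw)+\varepsilon^2Q(d_xT^nw)\ge\sigma(d_xT^n)^2\bigl(2\varepsilon B(v,w)+\varepsilon^2Q(w)\bigr)$, and the terms you must move to the right are uncontrolled: $Q(d_xT^nw)$ itself grows at least like $\sigma(d_xT^n)^2Q(w)$, cancelling the $\varepsilon^2$ gain, and the cross term $B(d_xT^nv,d_xT^nw)$ obeys no Cauchy--Schwarz-type bound in either direction, since $Q$ is indefinite of signature $(3,3)$. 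Sending $\varepsilon\to0$ first collapses the right-hand side (because $Q(v)=0$), and the monotonicity bound $Q(d_xT^nv)\ge Q(d_xT^Nv)$ only gives a nondecreasing sequence, not an unbounded one; so the argument as sketched establishes nothing beyond $Q(d_xT^nv)\ge 0$ on the boundary. To close this case one needs genuine structural input on monotone symplectic maps — e.g. the fact that $\sigma(L)>1$ already forces $L\bigl(\overline{\mathcal{C}}\setminus\{\vec0\}\bigr)\subset\mathcal{C}$, combined with quantitative estimates and supermultiplicativity of $\sigma$ along the orbit, which is precisely the machinery of Section 6 of \cite{LW92} (compare the parallel characterization quoted here as Theorem \ref{thm1}); homogeneity, compactness and a naive perturbation do not suffice.
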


\section{Ergodicity}\label{5}
The theory of Katok-Strelcyn \cite{KS86} implies, that since our system has non-zero Lyapunov 
exponents almost everywhere, we can partition the phase space $\mathcal{M}^+$ into countably many 
components on which the conditional smooth measure is ergodic. To prove that there is only one 
ergodic component the following two points need to be verified
\begin{enumerate}
\item Local Ergodicity. \label{geh}
\item Abundance of sufficiently expanding points. \label{scheissn}
\end{enumerate}
\subsection{Local Ergodicity}
We start with the following
\begin{definition}\label{reg}
A compact subset $X \subset \mathcal{M}^{+}$, is called regular if
\begin{enumerate}
	\item $X = \bigcup_{i = 1}^{n} I_{i}$, where $I_{i}$ are compact submanifolds, 
	with $I_i = \overline{\operatorname{int}{I_i}}$,
	\item $\operatorname{dim} I_{i} = 3$,
	\item $I_{i} \cap I_{j} \subset \partial I_{i} \cup \partial I_{j},\ i \neq j$,
	\item $\partial I_{i} = \bigcup_{j = 1}^{m} H_{i,j}$, where $\operatorname{dim} H_{i,j} = 2$ and $H_{i,j}$ is compact.
\end{enumerate}
\end{definition}

Local ergodicity amounts to showing that around a point with least expansion 
coefficient larger than three, it is possible to find an open neighbourhood, which lies (mod 0) in one 
ergodic component. To claim this, one needs to check the following five conditions
\begin{con}[Regularity of singularity sets]\label{condi3}
	The singularity sets $\mathcal{S}_n^{+}$ and $\mathcal{S}_n^{-}$ are both regular
	sets for every $n \geq 1$. 
\end{con}

\begin{con}[Non-contraction property]\label{condi1}
There exists $\zeta > 0$, such that for every $n \geq 1$, 
$x \in \mathcal{M}^+ \setminus \mathcal{S}_n^+$,  and 
$(\delta \xi, \delta \eta) \in \overline{\mathcal{C}(x)}$, we have 
\begin{align}
\|d_xT^n(\delta \xi, \delta \eta)\| \geq \zeta \|(\delta \xi, \delta \eta)\|.\nonumber
\end{align}
\end{con}

\begin{con}[Chernov-Sinai Ansatz]\label{condi2}
For $\mu_{\mathcal{S}^-}$-a.e. $x \in \mathcal{S}^-$, we have
\begin{align}
\lim_{n \to +\infty} Q(d_xT^n(\delta \xi, \delta \eta)) = +\infty,\nonumber
\end{align}
for all $(\delta \xi, \delta \eta) \in \overline{\mathcal{C}(x)}$.
\end{con}

\begin{con}[Continuity of Lagrangian subspaces]\label{condi4}
The ordered pair of transversal Lagrangian subspaces $(L_{1}(x)$, $L_{2}(x))$ varies continuously in 
$\operatorname{int}\mathcal{M}^{+}$. 
\end{con}

\begin{con}[Proper Alignment]\label{condi5}
There exists $N \geq 0$, such that for every $x \in \mathcal{S}^{+}$ 
resp. $\mathcal{S}^{-}$, we have $d_xT^{-N} v_{x}^{+}$ resp. $d_xT^N v_{x}^{-}$ belong to 
$\overline{\mathcal{C}^\prime (T^{-N} x)}$ resp. $\overline{\mathcal{C}(T^N x)}$, 
where $v_{x}^{+}$ resp. $v_{x}^{-}$ are the characteristic lines\footnote{The characteristic line 
$v_{x}^{\pm}$ is a vector of $\mathcal{T}_{x}\mathcal{S}^{\pm}$ that has the property of 
annihilating every other vector $w \in \mathcal{T}_{x}\mathcal{S}^{\pm}$ with respect to the 
symplectic form $\omega$, i.e. $\omega(v_{x}^{\pm},w) = 0$, $\forall\ w \in \mathcal{T}_{x}\mathcal{S}^
{\pm}$. Alternatively stated, it is the $\omega$-orthogonal complement of $\mathcal{T}_{x}\mathcal{S}^{
\pm}$. Note, that in symplectic geometry the $\omega$-orthogonal complement of a codimension one 
subspace is one dimensional.} 
of $\mathcal{T}_x\mathcal{S}^{+}$ resp. $\mathcal{T}_x\mathcal{S}^{-}$.
\end{con}
At the moment, for three or more falling balls, only Condition \ref{condi4} has been verified.
This is in fact easy to see, because the canonical pair of transversal Lagrangian subspaces 
(\ref{constant}) does not depend on the base point $x$ and is therefore constant in $\mathcal{M}^+$.
Note, that Conditions \ref{condi1} and \ref{condi2} also have to hold in negative time. 

\begin{local}
If Conditions \ref{condi3} - \ref{condi5} are satisfied, then for any $x \in \mathcal{M}^+$ and 
$n \geq 1$, such that $\sigma(d_xT^n) > 3$, there exists an open ergodic neighbourhood 
$\mathcal{U}(x)$, that lies $(\operatorname{mod} 0)$ in one ergodic component. 
\end{local}

Chernov postulated in \cite{Ch93} a weaker condition to Condition \ref{condi5}. Denote by 
$W^{u}(x)$ resp. $W^{s}(x)$ the unstable resp. stable manifolds at point $x$.
\begin{con}[Transversality]\label{condi6}
For $\mu_{\mathcal{S}^{\pm}}$-a.e. $x$, the stable subspace $W^{s}(x)$ resp. unstable subspace 
$W^{u}(x)$ is transversal to $\mathcal{S}^-$ resp. $\mathcal{S}^+$. 
\end{con}
\begin{lemma}\label{equivalence}
The proper alignment condition implies the transversality condition.
\end{lemma}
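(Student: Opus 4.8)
The plan is to translate the proper alignment condition, which is phrased in terms of the characteristic line $v_x^{\pm}$ landing in the (closed) cone after finitely many iterates, into a transversality statement between the stable/unstable subspaces and the singularity manifolds. The key observation is that the characteristic line $v_x^{+}$ spans the $\omega$-orthogonal complement of $\mathcal{T}_x\mathcal{S}^{+}$, so the stable subspace $W^s(x)$ is transversal to $\mathcal{S}^{+}$ if and only if $v_x^{+} \notin \mathcal{T}_x\mathcal{S}^{+}$ — equivalently, $\omega(v_x^{+}, v_x^{+}) \neq 0$ is automatic, and what must be ruled out is $v_x^{+} \in W^s(x)$ with $W^s(x) \subset \mathcal{T}_x\mathcal{S}^{+}$. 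So I would first reformulate: $W^s(x)$ fails to be transversal to $\mathcal{S}^{+}$ exactly when $W^s(x) \subseteq \mathcal{T}_x\mathcal{S}^{+}$ (dimension count: both are $3$-dimensional in the $6$-dimensional tangent space... actually one should be careful, since we work inside the energy surface; $W^s$ is $2$-dimensional and $\mathcal{T}_x\mathcal{S}^{+}$ has codimension one in the $4$-dimensional tangent to the energy level, so non-transversality means $W^s(x) \subset \mathcal{T}_x\mathcal{S}^{+}$).

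First I would recall that $W^s(x)$, being the stable subspace, is a Lagrangian subspace on which $Q$ is negative definite away from zero (this is the standard picture: stable directions lie strictly inside $\overline{\mathcal{C}'}$ under forward iteration and contract; more precisely $d_xT^n$ maps $W^s(x)$ into $\overline{\mathcal{C}'(T^n x)}$ for all $n$, and $Q \to 0$ along it). Dually, $W^u(x)$ lies in $\overline{\mathcal{C}(x)}$. Next, suppose for contradiction that $W^s(x) \subset \mathcal{T}_x\mathcal{S}^{+}$ on a positive $\mu_{\mathcal{S}^{+}}$-measure set. Then the characteristic line $v_x^{+}$, being $\omega$-orthogonal to all of $\mathcal{T}_x\mathcal{S}^{+}$, is in particular $\omega$-orthogonal to $W^s(x)$; but $W^s(x)$ is Lagrangian of maximal dimension in the relevant symplectic subspace, so its $\omega$-orthogonal complement is $W^s(x)$ itself, forcing $v_x^{+} \in W^s(x)$. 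Now apply the proper alignment condition: there is $N \geq 0$ with $d_xT^{-N} v_x^{+} \in \overline{\mathcal{C}'(T^{-N}x)}$. But $v_x^{+} \in W^s(x)$ means $d_xT^{-N} v_x^{+} \in W^s(T^{-N}x)$, and along the stable subspace backward iteration is expanding with respect to $-Q$: more precisely $Q(d_xT^{-N} v_x^{+}) < 0$ strictly unless the vector is zero, which is consistent — so I actually need the finer point. The contradiction I want is with the behavior in the \emph{other} time direction, or with $W^u$: better, I should run the same argument using that $v_x^{+}$ also cannot simultaneously be consistent with $W^u$. Let me instead use the companion half of proper alignment symmetrically — the precise mechanism is that proper alignment says $v_x^{+}$ is pushed (backward) strictly into the \emph{expanding} cone $\overline{\mathcal{C}'}$, whereas membership in $W^s$ would push it into the \emph{contracting} regime, and these are incompatible because the cones $\mathcal{C}$ and $\mathcal{C}'$ meet only at the boundary and $W^s$, $W^u$ sit on opposite sides.

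The cleanest route, which I would carry out in detail, is: (i) show non-transversality of $W^s(x)$ to $\mathcal{S}^{+}$ implies $v_x^{+}\in W^s(x)$ via the $\omega$-orthogonality characterization of the characteristic line and the Lagrangian/self-orthogonality of $W^s$; (ii) invoke the Main-Theorem / strict unboundedness picture (Theorem~\ref{relation}) to know that $W^u(x)$ spans the directions on which $Q(d_xT^n \cdot) \to +\infty$, so that a vector in $\overline{\mathcal{C}'(x)}$ that is \emph{not} in $\overline{\mathcal{C}(x)}$ cannot lie in $W^u$, and a vector in $\overline{\mathcal{C}(x)}$ cannot lie in $W^s$; (iii) combine with proper alignment, which places $d_xT^{-N}v_x^{+}$ in $\overline{\mathcal{C}'(T^{-N}x)}$, to conclude $d_xT^{-N}v_x^{+}\notin \overline{\mathcal{C}(T^{-N}x)}\supseteq$ image of $W^u$; but $v_x^{+}\in W^s(x)$ forces (under backward iteration, by time-reversal symmetry of the hyperbolic splitting) $d_xT^{-N}v_x^{+}$ into the unstable cone $\overline{\mathcal{C}(T^{-N}x)}$ — contradiction. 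The symmetric argument handles $W^u(x)$ versus $\mathcal{S}^{-}$ using the $\mathcal{S}^{-}$ half of Condition~\ref{condi5}.

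The main obstacle I expect is step (ii)–(iii): being careful about which closed cone ($\overline{\mathcal{C}}$ vs.\ $\overline{\mathcal{C}'}$) the stable and unstable subspaces land in under forward versus backward iteration, and verifying that the dimension bookkeeping on the energy surface makes ``non-transversal'' equivalent to ``$v_x^{\pm}$ lies in $W^{s/u}$'' rather than some weaker intersection condition. One also needs that the stable/unstable subspaces are genuinely Lagrangian (so self-$\omega$-orthogonal), which follows from the symplectic structure and the $Q$-monotonicity established earlier, and that proper alignment's finite $N$ can be absorbed because the hyperbolic splitting is $dT$-invariant. Modulo these careful sign/dimension checks, the argument is a direct confrontation of the cone-placement given by proper alignment with the cone-placement forced by membership in a stable or unstable subspace.
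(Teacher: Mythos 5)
Your step (i) reproduces the paper's argument exactly, and it is correct: non-transversality forces $W^s(x)\subset\mathcal{T}_x\mathcal{S}$, the characteristic line is by definition the $\omega$-orthogonal complement of $\mathcal{T}_x\mathcal{S}$, and since $W^s(x)$ is Lagrangian its $\omega$-orthogonal complement is itself, so the characteristic line must lie in $W^s(x)$. The genuine gap is in how you close the contradiction, and it comes from pairing the invariant subspaces with the wrong singularity sets. Condition~\ref{condi6} asks for $W^s$ transversal to $\mathcal{S}^-$ and $W^u$ transversal to $\mathcal{S}^+$; you argue $W^s$ against $\mathcal{S}^+$ (and ``symmetrically'' $W^u$ against $\mathcal{S}^-$). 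With your pairing no contradiction is available: proper alignment at $x\in\mathcal{S}^+$ places $d_xT^{-N}v_x^+$ in $\overline{\mathcal{C}'(T^{-N}x)}$, while $v_x^+\in W^s(x)$ places $d_xT^{-N}v_x^+$ in $W^s(T^{-N}x)$, which also lies in the closed negative cone --- the two placements are compatible, as you yourself observe (``which is consistent''). Your attempted repair, namely that backward iteration forces $d_xT^{-N}v_x^+$ into the unstable cone $\overline{\mathcal{C}(T^{-N}x)}$, is false: the stable subspace is $dT$-invariant in both time directions, so backward images of stable vectors remain stable vectors and stay in $\overline{\mathcal{C}'}$; they do not migrate into $\overline{\mathcal{C}}$.

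The repair is simply to argue the pairing as stated in Condition~\ref{condi6}. At $x\in\mathcal{S}^-$ with $W^s(x)\subset\mathcal{T}_x\mathcal{S}^-$, your step (i) gives $v_x^-\in W^s(x)$, hence $d_xT^Nv_x^-\in W^s(T^Nx)$ by invariance, while Condition~\ref{condi5} gives $d_xT^Nv_x^-\in\overline{\mathcal{C}(T^Nx)}$. These are incompatible for a nonzero vector: by strict unboundedness (Main Theorem together with Theorem~\ref{relation}) every nonzero vector of $\overline{\mathcal{C}(T^Nx)}$ satisfies $Q\to+\infty$ under forward iteration, whereas along a stable vector the norms tend to zero and $Q$, being continuous and homogeneous of degree two, is bounded by a constant times the squared norm, so $Q\to 0$; hence $W^s(T^Nx)\cap\overline{\mathcal{C}(T^Nx)}=\{\vec{0}\}$, a contradiction. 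The time-reversed argument with $W^u$, $\mathcal{S}^+$, $d_xT^{-N}$ and $\overline{\mathcal{C}'}$ handles the other half. (The paper's own proof compresses all of this by reading ``properly aligned'' at the point as $v_x^-\notin\mathcal{T}W^s(x)$, so that the linear-algebra step alone yields the contradiction; your proposal makes the cone-placement step explicit, which is exactly where the pairing matters.)
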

\begin{proof}
Assume that at point $x \in \mathcal{S}^-$, the singularity manifold and the stable manifold 
$W^s(x)$ are not transversal but still properly aligned, i.e. 
$\mathcal{T}W^s(x) \subset \mathcal{T}\mathcal{S}^-$ and $v_x^- \cap \mathcal{T}W^s(x) = \varnothing$. 
Since transversality is not satisfied and $v_x^-$ is the characteristic line, we have 
$\omega(v_x^-,v) = 0$, for all $v \in \mathcal{T}W^s(x)$. This means, that 
$v_x^- \in (\mathcal{T}W^s(x))_{\omega}^{\perp}$, where $(\mathcal{T}W^s(x))_{\omega}^{\perp}$ 
is the $\omega$-orthogonal complement of $\mathcal{T}W^s(x)$. But $\mathcal{T}W^s(x)$ is 
a Lagrangian subspace and, thus, $(\mathcal{T}W^s(x))_{\omega}^{\perp} = \mathcal{T}W^s(x)$. 
Hence, $v_x^- \in \mathcal{T}W^s(x)$, a contradiction.
\end{proof}
Even though the proper alignment implies transversality, 
it is presently unclear whether it is enough for the local ergodic theorem (in the 
Liverani-Wojtkowski framework) to 
hold by considering the validity of the proper alignment condition only on a set of full measure 
with respect to the measure $\mu_{\mathcal{S}^{\pm}}$.

\subsubsection{The current state of proper alignment}\label{state}
There has been a substantial misconception whether the system of falling balls is properly aligned 
or not. In brief, the correct answer to this question is that on some part of the singularity 
manifold the system is properly aligned and on the complementary part we simply do not know. 
The latter affects only the singularity manifolds $\mathcal{S}_{1,2}^{\pm}$, since every point 
on $\mathcal{S}_{1,1}^{\pm}$ and $\mathcal{S}_{3,1}^{\pm}$ is properly aligned.
The original formulation of the proper alignment condition in \cite{LW92} is more restrictive
than the one stated above. Namely, it demands the characteristic line $v_x^-$ resp. $v_x^+$ to lie in 
$\mathcal{C}(x)$ resp. $\mathcal{C}^{\prime}(x)$ for every point of the singularity 
manifolds. 
Below of the original proper alignment condition it says (\cite[p. 37]{LW92})
\begin{quote}
It will be clear from the way in which the proper alignment of singularity sets is 
used in Section 12 that it is sufficient to assume that there is $N$ such that 
$T^N\mathcal{S}^-$ and $T^{-N}\mathcal{S}^+$ are properly aligned. 
\end{quote}
In Section 12 of \cite{LW92} the authors remind the reader, that, in their constructive argument, 
the size of the neighbourhood $\mathcal{U}(x)$, appearing in the Local Ergodic Theorem, was chosen 
small enough, such that $\mathcal{U}(x) \cap \mathcal{S}_N^- = \varnothing$. 
Due to the regularity of singularity manifolds (see Condition \ref{condi3}), for every $M > N$, there 
exists a finite $p = p(M) > 0$, such that $\bigcup_{i = N}^M T^i\mathcal{S}^- = \bigcup_{k = 1}^p I_k$, 
where $I_k$ are compact submanifolds (see Definition \ref{reg}). 
In the proof of Proposition 12.2, Liverani and Wojtkowski make use of the fact, that \textbf{every} point 
$x \in I_k$ is properly aligned (see \cite[p. 185]{LW92}). Hence, the relaxed version of the proper 
alignment condition (see Condition \ref{condi5}) is justified.

The authors continue (see \cite[p. 37]{LW92}) with the following assertion
\begin{quote}
We will show, in section 14, that for the system of falling balls even this weaker property 
(see Condition \ref{condi5}) fails.
\end{quote}
The content of the last quotation is wrong. We will now illustrate what Liverani and Wojtkowski really 
did in section 14: The argument is carried out for the singularity manifold $\mathcal{S}_{1,2}^-$. 
The characteristic line at point $x \in \mathcal{S}_{1,2}^-$ is given by
\begin{align}
v_x^- = \{(\delta q,\delta p) \in \mathcal{T}_x\mathcal{S}_{1,2}^-:\ 
&\delta q_1 = \delta q_2 = \delta q_3 = 0,\nonumber\\ 
&\sum_{i=1}^3 \delta p_i = 0,\ \sum_{i=1}^3 \frac{p_i\delta p_i}{m_i} = 0,\
\frac{p_1}{m_1} \leq \frac{p_2}{m_2} \leq \frac{p_3}{m_3}\}.\nonumber
\end{align}
The restrictions of the momenta follow from 
$\mathcal{S}_{1,2}^- \subset \mathcal{M}_2^+ \cap \mathcal{M}_3^+$. We will look at the set 
of momenta in a little bit more detail:  Without loss of generality let $t_0  < t_1$, 
$x = x(t_0) \in \mathcal{S}_{1,2}^-$ and $Tx = x(t_1) \in \mathcal{M}_1^+$. Since 
$p_1^+(t_0) / m_1 \leq p_2^+(t_0) / m_2 \leq p_3^+(t_0) / m_3$, applying the equations of motion
(\ref{equation2}) yields $p_1^-(t_1) / m_1 \leq p_2^-(t_1) / m_2 \leq p_3^-(t_1) / m_3$. 
Due to $x(t_1) \in \mathcal{M}_1^+$, we have $p_1^-(t_1) / m_1 < 0$. Incorporating the latter, 
we $(\operatorname{mod} 0)$ partition the set of eligible momenta at time $t_1$ into the subsets 
\begin{align}
&\mathsf{Mom}_1(q(t_1),p^-(t_1)) = \Bigl\{\frac{p_1^-(t_1)}{m_1} < 0 \leq \frac{p_2^-(t_1)}{m_2} \leq 
\frac{p_3^-(t_1)}{m_3}\Bigr\},\nonumber\\
&\mathsf{Mom}_2(q(t_1),p^-(t_1)) = \Bigl\{\frac{p_1^-(t_1)}{m_1} < \frac{p_2^-(t_1)}{m_2} \leq 0 \leq 
\frac{p_3^-(t_1)}{m_3}\Bigr\},\nonumber\\
&\mathsf{Mom}_3(q(t_1),p^-(t_1)) = \Bigl\{\frac{p_1^-(t_1)}{m_1} < \frac{p_2^-(t_1)}{m_2} \leq \frac{p_3^-(t_1)}{m_3} \leq 0\Bigr\}.\nonumber
\end{align}
Using again the equations of motion, we obtain in time $t_0$
\begin{align}
&\mathsf{Mom}_1(q(t_0),p^+(t_0)) = \Bigl\{\frac{p_1^+(t_0)}{m_1} < t_1-t_0 \leq \frac{p_2^+(t_0)}{m_2} \leq \frac{p_3^+(t_0)}{m_3}\Bigr\},\nonumber\\
&\mathsf{Mom}_2(q(t_0),p^+(t_0)) = \Bigl\{\frac{p_1^+(t_0)}{m_1} < \frac{p_2^+(t_0)}{m_2} \leq t_1-t_0 \leq \frac{p_3^+(t_0)}{m_3}\Bigr\},\nonumber\\
&\mathsf{Mom}_3(q(t_0),p^+(t_0)) = \Bigl\{\frac{p_1^+(t_0)}{m_1} < \frac{p_2^+(t_0)}{m_2} \leq \frac{p_3^+(t_0)}{m_3} \leq t_1-t_0\Bigr\}.\nonumber
\end{align}

Observe that all the momenta can only be simultaneously negative on the set $\mathsf{Mom}_3(q(t_0),p^+(t_0))$.

The quadratic form $Q$ of the contracting cone field in coordinates $(q,p)$ equals
\begin{align}
Q(\delta q, \delta p) = \sum_{i=1}^3 \delta q_i \delta p_i + \frac{p_i(\delta p_i)^2}{m_i^2}.\nonumber
\end{align}
Inserting $v_x^-$ into $Q$ results in
\begin{align}\label{qchar}
Q(v_x^-) = \sum_{i=1}^3 \frac{p_i(\delta p_i)^2}{m_i^2}.
\end{align}
Ths singularity manifold $\mathcal{S}_{1,2}^-$ at point $x$ is properly aligned if and only if 
$Q(v_x^-) \geq 0$. It is easy to see, that each of the sets $\mathsf{Mom}_i(q(t_0),p^+(t_0))$ contains 
a subset on which $\mathcal{S}_{1,2}^-$ is not properly aligned, i.e. $Q(v_x^-) < 0$. 
Hence, depending on the point $x \in \mathcal{S}_{1,2}^-$, (\ref{qchar}) can obtain non-negative and 
negative values on every set $\mathsf{Mom}_i(q(t_0),p^+(t_0))$.

Additionally note, that the image of the characteristic line is the characteristic line of the image, 
i.e. 
\begin{align}\label{image}
d_xT^n v_x^- = v_{T^nx}^-.
\end{align}
Combining this with the fact, that $d_xT$ is $Q$-monotone for 
every point $x \in \mathcal{M}^+$ (see Definition \ref{Defq}.3) we obtain, that once a point is 
properly aligned, it remains properly aligned.

We summarize, that on some parts of $\mathcal{S}_{1,2}^-$ the system of falling balls 
is properly aligned and on the complement we do not know, since an iterate of the characteristic 
line could very well be mapped into the contracting cone field. This is exactly what 
Liverani and Wojtkowski prove in section 14. More importantly, they do \textbf{not} examine 
whether any iterate of $v_x^-$ gets mapped into the contracting cone field or not. This is currently 
not known. 

\subsubsection{Iterates of the characteristic line}\label{iterates}
The Main Theorem allows us to compare the set of iterated singular points, which are not properly 
aligned, to not properly aligned points of the iterated singularity manifold. For this, an immediate 
consequence of the Main Theorem is, that the monodromy matrix $d_xT$ is eventually strictly $Q$-monotone 
for every point (see e.g. (\ref{point3}) in Theorem \ref{thm1}), i.e. for every $x \in \mathcal{M}^+$, 
there exists $k = k(x) \geq 1$: $Q(d_xT^kv) > Q(v)$, for all $v \in L_1 \oplus L_2$. Define, 
for $n \geq 1$, the sets 
\begin{align}
A(n,\mathcal{S}_{1,2}^-) &= \{x \in \mathcal{S}_{1,2}^-:\ Q(v_x^-)<0,\ Q(d_xT^nv) > Q(v), 
\forall\ v \in L_1 \oplus L_2\},\nonumber\\
\bigcup_{n \geq 1} A(n,\mathcal{S}_{1,2}^-) &= A(\mathcal{S}_{1,2}^-).\nonumber
\end{align}
The sets $A(n,\mathcal{S}_{1,2}^-)$ consist of all points in $\mathcal{S}_{1,2}^{-}$, 
which are not properly aligned and have an 
eventually strictly $Q$-monotone monodromy matrix after $n$ steps. We remark, that the sets 
$A(n,\mathcal{S}_{1,2}^-)$ are empty for small values of $n$. Once 
$A(n,\mathcal{S}_{1,2}^-) \neq \varnothing$, the $Q$-monotonicity of $d_xT$ for every point 
implies that $A(n,\mathcal{S}_{1,2}^-) \subseteq A(n+1,\mathcal{S}_{1,2}^-)$. 
Due to the eventually strict $Q$-monotonicity of $d_xT$, we have
\begin{align}
Q(d_{T^nx}T^{-n} v_{T^nx}^-) < Q(v_{T^nx}^-),\ \forall\ T^nx \in T^n A(n,\mathcal{S}_{1,2}^-).\nonumber
\end{align}
Using the last statement together with (\ref{image}), we obtain
\begin{align}
T^{-n} A(T^n\mathcal{S}_{1,2}^-) \subset A(n,\mathcal{S}_{1,2}^-) \subset A(\mathcal{S}_{1,2}^-).\nonumber
\end{align}

However, the size of $T^{-n} A(T^n\mathcal{S}_{1,2}^-)$ and whether there exists a fixed $N \geq 1$, 
such that $A(T^N\mathcal{S}_{1,2}^-) = \varnothing$, remains unknown.

\subsection{Abundance of sufficiently expanding points}
Liverani and Wojtkowski require the point in the local ergodic theorem to have least expansion 
coefficient larger than three. However, after their formulation of the local ergodic theorem 
they point out (see \cite[p. 39]{LW92}) that there is no loss in generality in actually demanding 
that the least expansion coefficient is only larger than one. The reason for this is due to the 
fact, that the set of points with non-zero Lyapunov exponents has full measure 
(see \cite{S96}, \cite{W98}). We quote
\begin{quote}
Let us note that the conditions of the last theorem are satisfied for almost all points 
$p \in \mathcal{M}$. Indeed, let
\begin{align}
M_{n,\epsilon} = \{p \in \mathcal{M}|\sigma(D_pT^n) > \epsilon\}.\nonumber
\end{align}
Since almost all points are strictly monotone, then
\begin{align}
\bigcup_{n=1}^{+\infty}\bigcup_{\epsilon > 0}\mathcal{M}_{n,\varepsilon}\nonumber
\end{align}
has full measure. By the Poincar\'e Recurrence Theorem and the supermultiplicativity of the 
coefficient $\sigma$ we conclude that
\begin{align}
\bigcup_{n=1}^{+\infty}\mathcal{M}_{n,3}\nonumber
\end{align}
has also full measure.
\end{quote}

\begin{definition}
	Under the assumption $\mu(\{x \in \mathcal{M}^+:\ \exists\ n = n(x) \geq 1,\ \sigma(d_xT^n) > 1)\}) = 1$, 
	a point $x \in \mathcal{M}^+$ is called sufficiently expanding, if there exists an $n \geq 1$, such that 
	$\sigma(d_xT^n) > 1$. 
\end{definition}
Once local ergodicity is established we know that every ergodic component is (mod 0) open. 
To obtain a single ergodic component one needs to verify
\begin{theorem}[Abundance of sufficiently expanding points]\label{abundance}
	The set of sufficiently expanding points has full measure and is arcwise connected.
\end{theorem}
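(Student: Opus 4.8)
The plan is to establish the two assertions separately: full measure, and arcwise connectedness. For the full measure part I would argue exactly along the lines of the quoted Liverani--Wojtkowski remark, but now invoking the Main Theorem to remove any ``almost everywhere'' caveat. By the Main Theorem, for \emph{every} $x \in \mathcal{M}^+$ the sequence $(d_{T^nx}T)_{n \in \mathbb{N}}$ is strictly unbounded, so by Theorem \ref{relation} we have $\lim_{n\to+\infty}\sigma(d_xT^n) = +\infty$; in particular there is $n=n(x)$ with $\sigma(d_xT^n) > 3 > 1$. Thus the set of sufficiently expanding points is in fact \emph{all} of $\mathcal{M}^+$, which trivially has full measure. (One should note that the definition of ``sufficiently expanding'' used here with threshold $1$ is harmless, since we actually get $\sigma > 3$, matching the threshold demanded by the Local Ergodic Theorem.)

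For arcwise connectedness, the strategy is to show that the complement of the sufficiently expanding set is so small that removing it from a connected set leaves a connected set. Since by the previous paragraph the sufficiently expanding set is all of $\mathcal{M}^+$ minus at most the set where the least expansion coefficient never exceeds $1$, and by the Main Theorem this latter set is empty, the sufficiently expanding set equals $\mathcal{M}^+$. So the task reduces to showing that $\mathcal{M}^+$ (or rather a suitable full-measure, connected piece of it — one typically works on the energy surface with its $\mathcal{N}(q,p)$ restriction, which is a connected manifold with boundary and corners) is arcwise connected. This is a statement about the configuration space $E_c \cap \mathcal{N}(q,p)$ and the Poincar\'e section built on it: the section $\mathcal{M}^+$ is the union of the three faces $\mathcal{M}_i^+$, and one checks that each face is connected and that they glue along lower-dimensional strata so that the union stays path-connected. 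The natural way to see this is to use the flow: any two points of $\mathcal{M}^+$ can be joined by first flowing, then moving within a face, using that the energy surface restricted to $\mathcal{N}$ is itself path-connected (any two admissible states $(q,p)$ and $(q',p')$ with the same energy can be joined by a path in $E_c \cap \mathcal{N}$ since the constraint region is a convex-like cone intersected with a smooth quadric).

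The subtle point — and where I expect the main obstacle — is the role of the \emph{double singular} set and the proper alignment condition, flagged in the sentence ``The validity of the latter guarantees that the set of double singular collisions is of codimension two.'' In the Liverani--Wojtkowski machinery, to chain the ergodic neighbourhoods $\mathcal{U}(x)$ from one sufficiently expanding point to another one needs the connecting arc to avoid the ``bad'' set where neighbourhoods cannot overlap on a positive-measure set, and this bad set is essentially the set of points whose forward or backward orbit hits two singularities; for the chaining to go through, this set must have codimension at least two in $\mathcal{M}^+$ so that its complement remains arcwise connected. Establishing that the double-singularity set is codimension two is precisely what proper alignment buys (as recalled in Subsection \ref{special} for the special mass ratio): where the singularity manifolds $\mathcal{S}_{1,1}^{\pm}, \mathcal{S}_{3,1}^{\pm}$ and the aligned part of $\mathcal{S}_{1,2}^{\pm}$ are concerned, transversality of their intersections gives codimension two, so their union cannot locally disconnect $\mathcal{M}^+$. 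I would therefore structure the proof as: (i) full measure via the Main Theorem and Theorem \ref{relation}; (ii) $\mathcal{M}^+$ minus a codimension-two set is arcwise connected, using path-connectedness of $E_c\cap\mathcal{N}$ together with the codimension-two bound on the double-singular set supplied by proper alignment for the special mass ratio; (iii) conclude, noting that for the special mass ratio of this paper proper alignment holds everywhere on $\mathcal{S}^{\pm}$, so the required codimension estimate is unconditional here.
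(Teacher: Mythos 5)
Your full-measure half is exactly the paper's route: the Main Theorem plus Theorem \ref{relation} give $\lim_{n\to+\infty}\sigma(d_xT^n)=+\infty$ at \emph{every} point (with the paper's convention of continuing both branches at singular points), so every point exceeds any fixed threshold ($1$ or $3$) and full measure is immediate; this is precisely how the paper upgrades the quoted Liverani--Wojtkowski remark.

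The problem is your step (iii), where you assert that ``for the special mass ratio of this paper proper alignment holds everywhere on $\mathcal{S}^{\pm}$, so the required codimension estimate is unconditional here.'' This is exactly what the paper does \emph{not} claim. Subsection \ref{special} establishes proper alignment only for the particle falling in the unfolded wide wedge (the billiard table), where the triple-collision singularity represented by the generator $h_1$ disappears; it then states explicitly that in the simple wedge, i.e.\ for the falling-balls section $\mathcal{M}^+$ with its singularity sets $\mathcal{S}^{\pm}$, the proper alignment condition cannot be deduced from the wide-wedge dynamics and ``remains unknown at the moment'' --- for \emph{any} mass configuration, since a trajectory reaching the corner $h_1$ has two images. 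Subsection \ref{state} quantifies the obstruction: only $\mathcal{S}_{1,1}^{\pm}$ and $\mathcal{S}_{3,1}^{\pm}$ are properly aligned everywhere, while on parts of $\mathcal{S}_{1,2}^{\pm}$ one has $Q(v_x^-)<0$, and whether some forward image $d_xT^nv_x^- = v_{T^nx}^-$ ever lands in the closed cone is open (Subsection \ref{iterates}). Hence the ``unconditional'' codimension-two bound on the double-singular set of $\mathcal{M}^+$ that you invoke is not available as stated; the paper's (admittedly very brief) justification of arcwise connectedness rests on the proper alignment of the unfolded billiard from Subsection \ref{special}, not on proper alignment of $\mathcal{S}^{\pm}$ itself. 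A related structural point: once you have shown that the sufficiently expanding set is all of $\mathcal{M}^+$, the codimension-two/double-singularity discussion really belongs to the subsequent chaining of the neighbourhoods $\mathcal{U}(x)$ (where proper alignment enters as Condition \ref{condi5} of the Local Ergodic Theorem), so your proof of the theorem as stated should not be made to depend on the unproved alignment claim on $\mathcal{S}^{\pm}$.
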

More precisely, this implies, that one can connect any two sufficiently expanding points by a curve, 
which lies completely in the set of sufficiently expanding points. 
Consequently the points on the curve can be chosen in such a way, that the open 
neighbourhoods, from the local ergodic theorem, intersect pairwise on a set of positive measure. 
Hence, there can only be one ergodic component. For a more detailed proof see e.g.
\cite[p. 151 - 152]{CM06}.

\section{Strict unboundedness - Part I}\label{6}
In this section we will begin with the proof of the strict unboundedness of the sequence $(d_{T^n x}T)_{n \in \mathbb{N}}$, 
for every $x \in \mathcal{M}^+$. Due to \cite[Theorem 6.8]{LW92} we have the following equivalence

\begin{theorem}\label{thm1}
	For every $x \in \mathcal{M}^+$, the sequence $(d_{T^n x}T)_{n \in \mathbb{N}}$ is strictly unbounded if and only if
	\begin{subequations}
		\begin{eqnarray}
		&&\text{For every}\ x \in \mathcal{M}^+,\ \text{the sequence}\ (d_{T^n x}T)_{n \in \mathbb{N}}\ \text{is unbounded.}\label{point2}\\
		&&\text{For every}\ x \in \mathcal{M}^+,\ \text{there exist}\ k_1, k_2 \in \mathbb{N},\ 
		\text{such that } Q(d_xT^{k_1}(\delta \xi, 0)) > 0\label{point3}\\ 
		&&\text{and}\ Q(d_xT^{k_2}(0, \delta \eta)) > 0,\ \text{for all}\ (\delta \xi, 0) \in L_1, (0, \delta \eta) \in L_2.\nonumber
		\end{eqnarray}
	\end{subequations}
\end{theorem}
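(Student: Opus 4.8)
The plan is to establish the two implications separately, relying throughout on the facts that $d_xT$ is $Q$-monotone at \emph{every} point of $\mathcal{M}^+$, and on the characterizations of eventual strict $Q$-monotonicity in Definition \ref{Defq} and Lemma \ref{lem}.

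\emph{Necessity.} Suppose $(d_{T^nx}T)_{n\in\mathbb{N}}$ is strictly unbounded at every $x$. Since $\mathcal{C}(x)\setminus\{\vec 0\}\subseteq\overline{\mathcal{C}(x)}\setminus\{\vec 0\}$, (\ref{point2}) is immediate. For (\ref{point3}), note that $Q$ vanishes on each Lagrangian subspace $L_i$, so $L_1,L_2\subseteq\overline{\mathcal{C}(x)}$ (one writes $\overline{\mathcal{C}(x)}=\{Q\geq 0\}$, using transversality of $L_1,L_2$ to produce approximating vectors with $Q>0$). Hence for every nonzero $v\in L_1$ strict unboundedness gives $Q(d_xT^nv)\to+\infty$, so $Q(d_xT^nv)>0$ for some $n$. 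To make the exponent uniform, consider the unit sphere $S_1\subseteq L_1$ and set $U_n=\{v\in S_1:Q(d_xT^nv)>0\}$: each $U_n$ is open (linearity of $d_xT^n$, continuity of $Q$), $Q$-monotonicity along the orbit gives $Q(d_xT^{n+1}v)\geq Q(d_xT^nv)$ hence $U_n\subseteq U_{n+1}$, and by the previous line $\bigcup_nU_n=S_1$. Compactness of $S_1$ forces $S_1=U_{k_1}$ for some $k_1$, and homogeneity of $Q$ extends $Q(d_xT^{k_1}(\delta\xi,0))>0$ to all of $L_1$; the same argument on $L_2$ produces $k_2$.

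\emph{Sufficiency.} Assume (\ref{point2}) and (\ref{point3}) for every point, and fix $x\in\mathcal{M}^+$. With $k:=\max\{k_1(x),k_2(x)\}$, $Q$-monotonicity along the orbit upgrades (\ref{point3}) to $Q(d_xT^k(\delta\xi,0))>0$ and $Q(d_xT^k(0,\delta\eta))>0$ for all nonzero $(\delta\xi,0)\in L_1$ and $(0,\delta\eta)\in L_2$. By Lemma \ref{lem}, $d_xT$ is eventually strictly $Q$-monotone at $x$, and by the equivalence of parts \textit{2.} and \textit{4.} of Definition \ref{Defq} there is $\ell=\ell(x)\geq1$ with $d_xT^{\ell}\,\overline{\mathcal{C}(x)}\subset\mathcal{C}(T^{\ell}x)$. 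Given $v\in\overline{\mathcal{C}(x)}\setminus\{\vec 0\}$, put $w:=d_xT^{\ell}v$; since the collision maps $d\Phi_{i-1,i}$ are invertible (block-triangular with invertible diagonal blocks) and $d\phi^t=\operatorname{id}$ in the $(\xi,\eta)$-coordinates, $d_xT^{\ell}$ is invertible, so $w\in\mathcal{C}(T^{\ell}x)\setminus\{\vec 0\}$, i.e. $Q(w)>0$. Applying (\ref{point2}) at the point $T^{\ell}x$ to the vector $w$ yields $Q(d_{T^{\ell}x}T^{n}w)\to+\infty$, and since $d_{T^{\ell}x}T^{n}w=d_xT^{n+\ell}v$ we obtain $Q(d_xT^{m}v)\to+\infty$ as $m\to\infty$. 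As $v$ and $x$ were arbitrary, strict unboundedness holds at every point.

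The one genuinely delicate step is the passage, in the necessity part, from ``each vector eventually expands'' to ``all vectors expand after a common number of steps''; this is exactly where $Q$-monotonicity along the orbit (to get nestedness of the sets $U_n$) and compactness of the projective space of $L_i$ are indispensable. The sufficiency direction then reduces to pushing the boundary $\partial\mathcal{C}(x)$ into the open cone using Lemma \ref{lem} and Definition \ref{Defq}, after which the plain unboundedness hypothesis (\ref{point2}), applied at the shifted base point, finishes the argument.
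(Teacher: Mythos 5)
Your argument is correct, but it is worth saying up front that the paper does not actually prove Theorem \ref{thm1}: it is taken verbatim from the general theory of monotone sequences of symplectic maps in \cite[Theorem 6.8]{LW92}, so the paper's ``proof'' is a citation. What you have done is reconstruct a self-contained derivation from the two ingredients the paper does record, namely $Q$-monotonicity of $d_xT$ at \emph{every} point (from \cite{W90a}) and the equivalence of cone invariance with $Q$-monotonicity in Definition \ref{Defq} (via \cite[Theorem 4.1]{LW92}), together with Lemma \ref{lem}. Both halves of your argument are sound: in the necessity direction, the identification $\overline{\mathcal{C}(x)}=\{Q\geq 0\}$ (so $L_1,L_2\subset\overline{\mathcal{C}(x)}$), the nestedness $U_n\subseteq U_{n+1}$ coming from monotonicity along the orbit, and compactness of the unit sphere in the two-dimensional spaces $L_i$ correctly produce the uniform exponents $k_1,k_2$ demanded in (\ref{point3}); in the sufficiency direction, Lemma \ref{lem} plus the equivalence \textit{2.}$\Leftrightarrow$\textit{4.} of Definition \ref{Defq} pushes $\overline{\mathcal{C}(x)}$ into the open cone after $\ell$ steps, and applying (\ref{point2}) at the shifted base point $T^{\ell}x$ together with the cocycle identity $d_xT^{n+\ell}=d_{T^{\ell}x}T^{n}\,d_xT^{\ell}$ finishes the claim (invertibility of $d_xT^{\ell}$ is indeed available from the block-triangular collision derivatives, though it is not strictly needed: eventual strict $Q$-monotonicity already gives $Q(d_xT^{\ell}v)>Q(v)\geq 0$, hence $d_xT^{\ell}v\neq\vec 0$). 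The only cosmetic caveat is that (\ref{point3}), like Lemma \ref{lem}, should be read with the zero vector excluded, since $Q$ vanishes there; this is an imprecision of the statements, not of your proof. Compared with simply invoking \cite[Theorem 6.8]{LW92}, your route buys transparency about exactly which hypotheses are used (everywhere-defined $Q$-monotonicity and finite dimensionality of the Lagrangian factors), at the cost of reproving a special case of a result that holds for general monotone sequences.
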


We will prove the strict unboundedness by equivalently proving properties (\ref{point2}) and (\ref{point3}). 
Let $\|\cdot\|$ denote the Euclidean norm.

The most important ingredient for (\ref{point2}) is the following
\begin{theorem}\label{thm2}
	There exists a positive constant $\Lambda > 0$, such that for all $x \in \mathcal{M}^+$, 
	there exists a sequence of strictly increasing positive  
	integers $(n_k)_{k \in \mathbb{N}} = (n_k(x))_{k \in \mathbb{N}}$ and for all 
	$(0, \delta \eta) \in L_2:$
	\begin{align}\label{esta1}
	Q(d_{T^{n_{2k-2}}x}T^{n_{2k-1} - n_{2k-2}}(0, \delta \eta)) > \Lambda \|(0, \delta \eta)\|^2.
	\end{align}
\end{theorem}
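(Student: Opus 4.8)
The plan is to establish the uniform quantitative expansion estimate \eqref{esta1} by tracking how a vector from the Lagrangian subspace $L_2$ (i.e. of the form $(0,\delta\eta)$) acquires a definite amount of $Q$-expansion after a controlled number of collision steps, the number depending on the combinatorial type of the orbit but with the expansion gain bounded below by a universal constant $\Lambda$. The starting observation is that $d\Phi_{0,1}$ is the only collision map whose lower-left block $B$ is nonzero, so it is precisely the floor collisions that move $\delta\xi$-components out of a purely-$\eta$ vector and thereby generate positive $Q$. Concretely, applying $d\Phi_{0,1}$ to $(0,\delta\eta)$ produces a vector whose $\delta\xi$-part is $(\delta\eta_1,\beta\,\delta\eta_2,0)$, and pairing this against the unchanged $\delta\eta$-part gives $Q = \delta\eta_1^2 + \beta\,\delta\eta_2^2$; the sign and size of $\beta = -2/(m_1 v_1^-)$ is controlled because at a floor collision $v_1^- < 0$, so $\beta > 0$. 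Between collisions the free flight acts as the identity in these coordinates, and the intermediate collision maps $d\Phi_{1,2}, d\Phi_{2,3}$ are $Q$-monotone (Wojtkowski's general result for $N\ge2$ balls), so no expansion is ever lost. Thus the heart of the matter is: (a) show that starting from any $x$ one reaches, within a bounded-in-structure number of steps, a configuration where a floor collision acts on a vector that still has a non-negligible component in the $\delta\eta_1$ or $\delta\eta_2$ direction; and (b) quantify the resulting gain uniformly.

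The key steps, in order, would be: First, reduce to controlling orbits by their collision-symbol sequences in the alphabet $\{(0,1),(1,2),(2,3)\}$ and recall that every collision type recurs infinitely often (this is stated in Section 3 — the parabolas cannot escape, so every collision happens infinitely often). Second, analyze the action of a block of collision maps on a vector $(0,\delta\eta)$: because $d\phi^t = \mathrm{id}$, the monodromy over a finite window is just an ordered product of the three constant-structure matrices $d\Phi_{0,1}, d\Phi_{1,2}, d\Phi_{2,3}$, and one computes that after any sufficiently rich finite word containing at least one floor collision and the right flanking collisions, the image vector has $\delta\xi$-part with a component bounded below by a fixed fraction of $\|\delta\eta\|$, while $Q$ has become strictly positive and $\ge \Lambda\|\delta\eta\|^2$. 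Third, handle the degenerate directions: a vector $(0,\delta\eta)$ with $\delta\eta$ lying entirely in the "bad" subspace where the first floor collision gives $Q = 0$ (namely $\delta\eta_1 = \delta\eta_2 = 0$, only $\delta\eta_3 \ne 0$) must be pushed by the intermediate $(2,3)$ and $(1,2)$ collisions, whose matrices $M_2^T, M_1^T$ mix the third coordinate into the second and then the first — here one uses that $1-\gamma_2 \ne 0$ and $1+\gamma_1 \ne 0$ (guaranteed by $m_1 > m_2 > m_3$) so the bad subspace is not invariant, and after at most two such collisions the vector has a component in the $\delta\eta_1,\delta\eta_2$ plane comparable to its norm, at which point the next floor collision delivers the expansion. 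Fourth, assemble the increasing sequence $(n_k)$: let $n_{2k-1}$ be the step index right after the $k$-th "productive" floor collision (one that acts on a vector with guaranteed component in the good directions) and $n_{2k-2}$ a suitable earlier index, so that \eqref{esta1} holds on each such window with the same $\Lambda$.

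The main obstacle I expect is step three combined with uniformity: showing that the coefficients $\beta, \alpha_1, \alpha_2, \gamma_1, \gamma_2$ and the geometry conspire so that the expansion gain $\Lambda$ can be taken \emph{independent of the phase point} $x$. The quantities $\beta$ and $\alpha_i$ from \eqref{alpha} depend on the collision velocities $v_i^-$, which a priori can be arbitrarily small (grazing collisions), making $\beta$ blow up — that is actually helpful for the gain — but small relative velocities $v_i^- - v_{i+1}^-$ make $\alpha_i$ small, which could in principle let a vector "slip through" without being rotated into the good directions. The resolution must use energy conservation ($\xi_1 = c$ is fixed) together with the confinement $0 \le q_1 \le q_2 \le q_3$ to bound the relevant velocities away from the degenerate regime over a window of a few collisions, or else to argue that even a small rotation, compounded over the unboundedly many floor collisions that must occur, accumulates — but since \eqref{esta1} asks for a \emph{single} window gain of $\Lambda\|\delta\eta\|^2$ rather than a limit, the cleanest route is a compactness/normalization argument on the projectivized vector together with a finite-word analysis, which is presumably where the bulk of the technical work in Section 6 lies. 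I would therefore expect the proof to proceed by fixing the combinatorial window length, normalizing $\|\delta\eta\| = 1$, and extracting $\Lambda$ as the infimum of an explicit continuous, strictly positive function over the relevant compact set of velocity parameters, with separate care for the boundary where some $v_i^-$ vanishes.
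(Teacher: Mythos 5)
There is a genuine gap, and it sits at the very center of your argument: the expansion mechanism for $L_2$ is miscomputed. The floor-collision derivative is lower block-triangular, $d\Phi_{0,1}(\delta\xi,\delta\eta)=(\delta\xi,\,B\delta\xi+\delta\eta)$, so it fixes every vector $(0,\delta\eta)\in L_2$ pointwise and produces no $Q$-gain whatsoever; the image you compute, with $\delta\xi$-part $(\delta\eta_1,\beta\delta\eta_2,0)$ and $Q=\delta\eta_1^2+\beta\delta\eta_2^2$, is what a floor collision does to $L_1$ vectors $(\delta\xi,0)$, not to $L_2$. For $L_2$ the roles are exactly reversed: the only source of expansion is the ball-to-ball maps, since $d\Phi_{i,i+1}(0,\delta\eta)=(U_i\delta\eta,\,M_i^T\delta\eta)$ gives $Q=\alpha_1\delta\eta_2^2$ for a $(1,2)$ collision and $Q=\alpha_2\delta\eta_3^2$ for a $(2,3)$ collision, so one needs a window containing both kinds of ball-to-ball collisions to control all of $\delta\eta$. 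Consequently the difficulty you postpone to the end is the whole theorem: by \eqref{alpha}, $\alpha_i$ is proportional to the velocity difference $v_i^--v_{i+1}^-$, which can be arbitrarily small at near-grazing collisions, so normalizing $\|\delta\eta\|=1$ and taking an infimum of a continuous functional does not yield a $\Lambda$ independent of $x$ unless you first prove that along \emph{every} orbit there are infinitely many windows in which both relevant velocity differences are bounded below by a universal constant. Your steps about the ``bad subspace'' $\delta\eta_1=\delta\eta_2=0$ and ``productive floor collisions'' are built on the wrong formula and do not correspond to the actual dynamics.

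The paper obtains precisely that missing uniform lower bound, and this is where the special mass relation \eqref{masses} enters (your outline never uses it, which is itself a warning sign, since the theorem is only claimed for that mass ratio). In the particle-in-a-wedge picture the configuration space is a simple wedge which, under \eqref{masses}, unfolds into a wide wedge; projecting along the first generator turns each arc between floor collisions into a straight segment in an equilateral triangle, so the collision patterns between consecutive floor collisions reduce to four explicit cases. Proposition \ref{prop4} identifies vanishing velocity differences with grazing collisions, and the triangle geometry shows that in each case the trajectory could only graze the relevant faces if it failed to reach the face it demonstrably reaches, giving angle bounds (of size $\pi/6$) and hence a uniform $\Theta>0$ for the velocity differences in the chosen windows; the degenerate orbits with only one ball-to-ball collision per floor return are handled separately using energy conservation, the invariance of $L_2$ under $d\Phi_{0,1}$, and the commutation of $d\Phi_{0,1}$ with $d\Phi_{2,3}$. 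Only after this geometric input does the compactness argument you envisage (a positive continuous functional on the unit sphere, with velocity differences replaced by $\Theta$) produce the uniform $\Lambda$. Without that input, and with the expansion mechanism inverted, the proposal does not prove the estimate.
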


In fact, we will prove, that $dT^{n_{2k-1} - n_{2k-2}}$ either equals $d\Phi_{2,3}d\Phi_{1,2}$, 
$d\Phi_{1,2}d\Phi_{2,3}$, $d\Phi_{2,3}d\Phi_{0,1}d\Phi_{1,2}$ or $d\Phi_{1,2}d\Phi_{0,1}d\Phi_{2,3}$.\footnote{The results remain valid if we allow multiple collisions with the floor, i.e. $d\Phi_{0,1}^{k_1}$ for every $k_1 \geq 1$.}32-33
Recursively define $(\delta \xi_n, \delta \eta_n) = dT (\delta \xi_{n-1}, \delta \eta_{n-1})$, with 
$(\delta \xi_0, \delta \eta_0) = (\delta \xi, \delta \eta)$ and $q_n = Q(\delta \xi_n, \delta \eta_n)$. 
From \cite{W90a} we know, that $d_xT$ is $Q$-monotone for every $x \in \mathcal{M}^+$, therefore, $q_{n+1} \geq q_n$. 
Hence, in order to prove $\lim_{n \to +\infty} q_n = +\infty$, it is enough to prove this 
divergence along a subsequence $(q_{n_{2k-1}})_{k \in \mathbb{N}}$. We define this subsequence by setting
\begin{align}\label{thisisit}
q_{n_{2k-1}} = Q(d_{T^{n_{2k-2}}x}T^{n_{2k-1} - n_{2k-2}}(\delta \xi_{n_{2k-2}}, \delta \eta_{n_{2k-2}})).
\end{align}

We will postpone the proof of Theorem \ref{thm2} and property (\ref{point3}) to section 8, as they will both 
follow from our analysis of a particle moving inside a wedge (see section 7). Here we will show how Theorem 
\ref{thm2} is utilized to prove the unboundedness property (\ref{point2}). In fact, (\ref{point2}) will 
be obtained by using the estimate from Theorem \ref{thm2} in a modified version of the unboundedness proof 
in \cite[p. 159 - 160]{LW92}. Beforehand we need to take some preparatory steps. 
\begin{prop}\label{prop5}
	For every $x \in \mathcal{M}^+$, we have
	\begin{align}\label{41}
	q_{n_{2k+1}} > q_{n_{2k}} + \Lambda\|(0, \delta \eta_{n_{2k}})\|^2.
	\end{align}
\end{prop}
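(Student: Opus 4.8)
The plan is to derive (\ref{41}) directly from Theorem \ref{thm2} together with the $Q$-monotonicity of $d_xT$. The starting point is the decomposition of the vector $(\delta\xi_{n_{2k}},\delta\eta_{n_{2k}})$ along the pair of Lagrangian subspaces $(L_1,L_2)$: write it as $(\delta\xi_{n_{2k}},0)+(0,\delta\eta_{n_{2k}})$ with the two summands lying in $L_1$ and $L_2$ respectively. Because $Q$ is the Euclidean inner product $\langle\delta\xi,\delta\eta\rangle$ associated to this pair, $Q$ is additive under the symplectic-bilinear expansion: for a linear symplectic map $S$ preserving the class of $Q$-monotone maps one has, by bilinearity of $\omega$, the identity $Q(Sv)=Q(S(\delta\xi,0))+Q(S(0,\delta\eta))+\omega\big((S(\delta\xi,0))_2,(S(0,\delta\eta))_1\big)$-type cross terms; the cleaner route, which I would follow, is to use that for $Q$-monotone $S$ one has the sharper inequality $Q(Sv)\ge Q(v)+Q(S(0,\delta\eta))$ when $(0,\delta\eta)\in L_2$ and $Q(0,\delta\eta)=0$. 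This is exactly the kind of ``additivity of expansion along a Lagrangian direction'' lemma that appears in \cite{LW92}; I would cite or reprove it in two lines.

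Next I would apply this with $S=d_{T^{n_{2k}}x}T^{\,n_{2k+1}-n_{2k}}$ acting on $v=(\delta\xi_{n_{2k}},\delta\eta_{n_{2k}})$. Split the exponent as $n_{2k+1}-n_{2k}=(n_{2k+1}-n_{2k-1}')$... more precisely, note that by construction $n_{2k}<n_{2k+1}$ and Theorem \ref{thm2} gives
\begin{align}
Q\big(d_{T^{n_{2k}}x}T^{\,n_{2k+1}-n_{2k}}(0,\delta\eta_{n_{2k}})\big) > \Lambda\,\|(0,\delta\eta_{n_{2k}})\|^2,\nonumber
\end{align}
since Theorem \ref{thm2} is stated for the step from $n_{2k-2}$ to $n_{2k-1}$ with arbitrary $(0,\delta\eta)\in L_2$, and the index shift $k\mapsto k+1$ covers the step from $n_{2k}$ to $n_{2k+1}$. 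Combining the two displays, $q_{n_{2k+1}}=Q(d_{T^{n_{2k}}x}T^{\,n_{2k+1}-n_{2k}}(\delta\xi_{n_{2k}},\delta\eta_{n_{2k}})) \ge q_{n_{2k}} + Q(d_{T^{n_{2k}}x}T^{\,n_{2k+1}-n_{2k}}(0,\delta\eta_{n_{2k}})) > q_{n_{2k}}+\Lambda\|(0,\delta\eta_{n_{2k}})\|^2$, which is (\ref{41}). One has to be slightly careful that intermediate iterates between $n_{2k}$ and $n_{2k+1}$ only help, which is fine because each $d_xT$ is $Q$-monotone, so $Q$ never decreases; this is what justifies replacing $q_{n_{2k}}$ by the (possibly larger) value reached after the first few steps and still keeping the inequality in the stated form, and it is also why only the subsequence $(q_{n_{2k-1}})$ needs to be tracked for divergence.

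The main obstacle is bookkeeping rather than conceptual: ensuring that the ``additivity'' inequality $Q(Sv)\ge Q(v)+Q(S(0,\delta\eta))$ is legitimately available for $S=d_xT^m$ — i.e. that the relevant cross term has a definite sign or vanishes — given that $d_xT$ is only $Q$-monotone (not symplectic-with-special-structure on each factor). If the clean additivity fails, the fallback is to expand $Q(Sv)=Q(S(\delta\xi,0))+Q(S(0,\delta\eta))+\langle (S(\delta\xi,0))_{\xi},(S(0,\delta\eta))_{\eta}\rangle+\langle (S(0,\delta\eta))_{\xi},(S(\delta\xi,0))_{\eta}\rangle$ and control the cross terms using $Q$-monotonicity applied to the vectors $(\delta\xi,0)\pm(0,\delta\eta)$, which forces the cross terms to dominate $-\tfrac12(q_{n_{2k}}+Q(S(0,\delta\eta)))$ or similar — this is the standard trick and it yields (\ref{41}) with possibly a worse constant, but since $\Lambda$ is already an unspecified positive constant this is harmless. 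I expect the author's proof to take exactly this short route, invoking $Q$-monotonicity and Theorem \ref{thm2}.
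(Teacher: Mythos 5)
Your overall skeleton (decompose $(\delta\xi_{n_{2k}},\delta\eta_{n_{2k}})$ along $(L_1,L_2)$, keep track of the cross term, and feed the $L_2$-component into Theorem \ref{thm2} with the index shift $k\mapsto k+1$) matches the paper, but the step you lean on --- the ``additivity'' inequality $Q(Sv)\ge Q(v)+Q(S(0,\delta\eta))$ for an arbitrary $Q$-monotone symplectic $S$ --- is false, and your fallback does not repair it. Writing $S=\begin{pmatrix} A & B\\ C & D\end{pmatrix}$ in the $(L_1,L_2)$ splitting, symplecticity gives $A^TD-C^TB=\operatorname{id}$, hence $Q(Sv)=Q(v)+Q(S(\delta\xi,0))+Q(S(0,\delta\eta))+2\langle C\delta\xi,B\delta\eta\rangle$. $Q$-monotonicity (applied to $(\delta\xi,t\delta\eta)$ and optimizing over $t$) only yields $2\lvert\langle C\delta\xi,B\delta\eta\rangle\rvert\le 2\sqrt{Q(S(\delta\xi,0))\,Q(S(0,\delta\eta))}$, so the best general conclusion is $Q(Sv)\ge Q(v)+\bigl(\sqrt{Q(S(\delta\xi,0))}-\sqrt{Q(S(0,\delta\eta))}\bigr)^2$, which gives nothing at all when the two expansions coincide; a $2\times 2$ example with $a=d=\sqrt{2}$, $b=c=1$ is strictly $Q$-monotone yet violates your additivity inequality. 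Likewise, testing monotonicity on $(\delta\xi,0)\pm(0,\delta\eta)$ produces only an upper bound on the cross term, not the lower bound you need, so the claim that this ``standard trick'' recovers (\ref{41}) with a worse constant is unfounded.

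What actually closes the gap --- and is essentially the whole content of the paper's proof --- is the explicit structure of the map between $T^{n_{2k}}x$ and $T^{n_{2k+1}}x$: by the construction behind Theorem \ref{thm2} it is (without loss of generality) $d\Phi_{1,2}d\Phi_{2,3}$, a symplectic block \emph{upper} triangular matrix $\begin{pmatrix} M_1M_2 & M_1U_2+U_1M_2^T\\ 0 & M_1^TM_2^T\end{pmatrix}$, so $C=0$ and the cross term is exactly $\langle M_1M_2\,\delta\xi,\,M_1^TM_2^T\,\delta\eta\rangle=\langle\delta\xi,(M_1M_2)^TM_1^TM_2^T\,\delta\eta\rangle=\langle\delta\xi,\delta\eta\rangle$ (using $M_i^2=\operatorname{id}$, equivalently $A^TD=\operatorname{id}$); adding the Theorem \ref{thm2} bound for the $(0,\delta\eta)$-part then gives (\ref{41}) immediately. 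So your plan is salvageable, but only by replacing the invalid general lemma with this concrete verification that the lower-left block vanishes and the diagonal blocks are symplectically paired for the specific products $d\Phi_{1,2}d\Phi_{2,3}$, $d\Phi_{2,3}d\Phi_{1,2}$ (the variants containing a floor collision are treated via the commutation of $d\Phi_{0,1}$ with $d\Phi_{2,3}$, the invariance of $L_2$ under $d\Phi_{0,1}$, and the positive semidefiniteness of its lower-left block), rather than by an appeal to $Q$-monotonicity alone.
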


\begin{proof}
	Without loss of generality let 
	$d_{T^{n_{2k}}x}T^{n_{2k+1} - n_{2k}}$ be the product of $d\Phi_{1,2}d\Phi_{2,3}$. Using (\ref{esta1}), we estimate
	\begin{eqnarray}
	q_{n_{2k+1}} &=& Q(d_{T^{n_{2k}}x}T^{n_{2k+1} - n_{2k}}(\delta \xi_{n_{2k}}, \delta \eta_{n_{2k}}))\nonumber\\[0.2cm]
	&=& Q\Bigl(
	\begin{pmatrix}
	M_1M_2 & M_1U_2 + U_1M_2^T\\
	0 & M_1^T M_2^T
	\end{pmatrix}
	\begin{pmatrix}
	\delta \xi_{n_{2k}}\\
	\delta \eta_{n_{2k}}
	\end{pmatrix}\nonumber
	\Bigr)\nonumber \\[0.2cm]
	&=& \langle M_1M_2 \delta \xi_{n_{2k}} + (M_1U_2 + U_1M_2^T)\delta \eta_{n_{2k}},\ M_1^T M_2^T \delta \eta_{n_{2k}} \rangle 
	\nonumber\\[0.2cm]
	&=& \langle M_1M_2 \delta \xi_{n_{2k}},\ M_1^T M_2^T \delta \eta_{n_{2k}} \rangle + 
	Q\Bigl(
	\begin{pmatrix}
	M_1M_2 & M_1U_2 + U_1M_2^T\\
	0 & M_1^T M_2^T
	\end{pmatrix}
	\begin{pmatrix}
	0\\
	\delta \eta_{n_{2k}}
	\end{pmatrix} \Bigr)\nonumber \\[0.2cm]
	&>& \langle \delta \xi_{n_{2k}},\ \delta \eta_{n_{2k}}\rangle + \Lambda \|(0, \delta \eta_{n_{2k}})\|^2\nonumber\\[0.2cm] 
	&=& q_{n_{2k}} + \Lambda \|(0, \delta \eta_{n_{2k}})\|^2.\nonumber
	\end{eqnarray}
\end{proof}

\begin{prop}\label{prop6}
	Let $(a_{n_k})_{k \in \mathbb{N}}$ be a sequence of positive numbers and $C$ a positive constant. If
	\begin{align} 
	\sum_{i = 0}^{+\infty}a_{n_{2i}}= +\infty \text{ then } \sum_{k = 0}^{+\infty}
	\frac{a_{n_{2k}}}{C +\sum_{i=0}^{k} a_{n_{2i}}} = +\infty.\nonumber
	\end{align} 
\end{prop}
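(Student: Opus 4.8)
The plan is to reduce this to a standard fact about divergent series, namely that if $(b_k)_{k \geq 0}$ is a sequence of positive numbers with $\sum_k b_k = +\infty$, then $\sum_k b_k / S_k = +\infty$, where $S_k = b_0 + \dots + b_k$ is the $k$-th partial sum. Here I would set $b_k = a_{n_{2k}}$ and note that the denominator $C + \sum_{i=0}^k a_{n_{2i}} = C + S_k$, so the target series is $\sum_k b_k / (C + S_k)$. The constant $C > 0$ only shifts the partial sums by a bounded amount, which cannot affect divergence, so it suffices to treat the case $C = 0$ (or absorb $C$ into $b_0$).

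First I would record the elementary inequality that drives everything: since $b_k = S_k - S_{k-1}$ (with $S_{-1} = 0$), and $t \mapsto 1/t$ is decreasing, for each $k$ we have
\begin{align}
\frac{b_k}{S_k} = \frac{S_k - S_{k-1}}{S_k} \geq \int_{S_{k-1}}^{S_k} \frac{dt}{t} = \log S_k - \log S_{k-1}.\nonumber
\end{align}
Summing this telescoping lower bound from $k = 1$ to $N$ gives $\sum_{k=1}^N b_k / S_k \geq \log S_N - \log S_0 \to +\infty$ as $N \to \infty$, because $S_N \to +\infty$ by hypothesis. This settles the case $C=0$. To handle general $C > 0$, I would simply observe that $C + S_k = C + b_0 + b_1 + \dots + b_k$ is itself the sequence of partial sums of the positive sequence $(C + b_0, b_1, b_2, \dots)$, whose total sum is again $+\infty$; applying the $C=0$ case to this shifted sequence yields $\sum_{k \geq 1} b_k/(C+S_k) = +\infty$, and dropping the $k=0$ term (or including it, since it is positive and finite) does not change divergence. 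Reindexing back to $a_{n_{2i}}$ completes the argument.

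The computation here is entirely routine; there is essentially no obstacle. The only point requiring a moment's care is making sure the shift by the constant $C$ is handled cleanly — this is why I prefer the "partial sums of a shifted sequence" framing rather than trying to compare $\sum b_k/(C+S_k)$ with $\sum b_k/S_k$ term by term, which would need $S_k$ bounded away from $0$ and is slightly more fiddly. An alternative, equally short route avoiding the integral comparison: if $\sum b_k/S_k$ converged, then its tails would tend to $0$, but for any $m < n$ one has $\sum_{k=m+1}^n b_k/S_k \geq (S_n - S_m)/S_n = 1 - S_m/S_n$, and choosing $n$ large (for fixed $m$) makes this at least $1/2$ since $S_n \to \infty$, contradicting the Cauchy criterion. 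Either version is a few lines.
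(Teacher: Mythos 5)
Your primary route contains a genuine error: the key inequality
\begin{align}
\frac{b_k}{S_k} \;\geq\; \int_{S_{k-1}}^{S_k}\frac{dt}{t}\nonumber
\end{align}
(with $b_k=a_{n_{2k}}$, $S_k=b_0+\cdots+b_k$ in your notation) is reversed. On $[S_{k-1},S_k]$ one has $1/t\geq 1/S_k$, so the integral is at least $(S_k-S_{k-1})/S_k=b_k/S_k$; equivalently, writing $r=S_{k-1}/S_k\in(0,1)$, the comparison is $1-r\leq\log(1/r)$, the opposite of what you claim (concretely, $S_{k-1}=1$, $S_k=2$, $b_k=1$ gives $b_k/S_k=\tfrac12<\log 2$). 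What the integral comparison actually yields is $b_k/S_{k-1}\geq\log S_k-\log S_{k-1}$, hence divergence of $\sum_k b_k/S_{k-1}$, which is not the series in question; since $S_k\geq S_{k-1}$ this does not transfer without an additional step (for instance: if $\sum_k b_k/S_k$ converged, then $b_k/S_k\to 0$, so $S_k/S_{k-1}\to 1$ and $b_k/S_{k-1}\leq 2\,b_k/S_k$ eventually, a contradiction). As written, the telescoping-logarithm argument does not prove the proposition.

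Your closing ``alternative'' route, on the other hand, is correct and complete, and it is essentially the paper's own proof: the tail estimate $\sum_{k=m+1}^{n} b_k/S_k\geq (S_n-S_m)/S_n=1-S_m/S_n\to 1$ shows the tails do not tend to zero, violating the Cauchy criterion; your handling of the constant $C$ by passing to the shifted sequence $(C+b_0,b_1,b_2,\dots)$ is sound. The paper does the same thing, only keeping $C$ explicitly in the denominator: it bounds $\sum_{k=j}^{l} a_{n_{2k}}\bigl/\bigl(C+\sum_{i=0}^{k}a_{n_{2i}}\bigr)$ from below by $\bigl(\sum_{k=j}^{l}a_{n_{2k}}\bigr)\bigl/\bigl(C+\sum_{i=0}^{j-1}a_{n_{2i}}+\sum_{i=j}^{l}a_{n_{2i}}\bigr)\to 1$ as $l\to+\infty$. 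So you should promote that paragraph to the main argument (or insert the missing repair step into the logarithm route); with that change the proof is fine.
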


\begin{proof}
	For $1 \leq j \leq l$, we have
	\begin{align}
	\sum_{k = j}^{l}\frac{a_{n_{2k}}}{C + \sum_{i=0}^{k} a_{n_{2i}}} > 
	\frac{\sum_{k = j}^{l}a_{n_{2k}}}{C + \sum_{i=0}^{j-1} a_{n_{2i}} + \sum_{i=j}^{l} a_{n_{2i}}}
	\to 1,\ \text{as}\ l \to +\infty.\nonumber
	\end{align}
	The tail of the series does not tend to zero, hence the series diverges.
\end{proof}

Consider the subsequence $(q_{n_{2k-1}})_{k \in \mathbb{N}}$ introduced in (\ref{thisisit}). Since $\prod_{k = 1}^{+ \infty} q_{n_{2k-1}} / q_{n_{2k-2}} = + \infty$ implies $\lim_{n \to +\infty}q_{n_{2k-1}}  = + \infty$, we will estimate
\begin{align}
\prod_{k = 1}^{+ \infty} \frac{q_{n_{2k-1}}}{q_{n_{2k-2}}} \geq \prod_{k = 1}^{+ \infty} 1 + r_k, \nonumber
\end{align}
and further prove, that $\sum_{k = 1}^{+ \infty} r_k = + \infty$, which yields the unboundedness.

Before we start with the proof of property (\ref{point2}) we need to recall and calculate some 
preliminary necessities: 
\begin{enumerate}
\item From the definition of the monodromy maps, 
we immediately obtain
\begin{align}\label{maps}
\begin{array}{rcl}
d\Phi_{0,1}(\delta \xi_{n-1}, \delta \eta_{n-1}) &= &
\begin{pmatrix}
\delta \xi_{n-1} \\
B \delta \xi_{n-1} + \delta \eta_{n-1}
\end{pmatrix}
=
\begin{pmatrix}
\delta \xi_n\\
\delta \eta_n
\end{pmatrix},\\[0.4cm]
d\Phi_{i,i+1}(\delta \xi_{n-1}, \delta \eta_{n-1}) &= & 
\begin{pmatrix}
M_i \delta \xi_{n-1} + U_i \delta \eta_{n-1} \\
M_i^T \delta \eta_{n-1}
\end{pmatrix}
=
\begin{pmatrix}
\delta \xi_n\\
\delta \eta_n
\end{pmatrix},\ i= 1,2.
\end{array}
\end{align}

\item Cheng and Wojtkowski introduced 
in \cite{ChW91} the norm
\begin{align}
\|\delta \xi\|_{CW}^2 = \sum_{i = 1}^2 \frac{(\delta \xi_{i+1} - \delta \xi_{i})^2}{m_i}.\nonumber
\end{align}
The maps $M_i$ are invariant with respect to this norm, i.e. 
\begin{align}\label{CW}
\|M_i \delta \xi\|_{CW} = \|\delta \xi\|_{CW}. 
\end{align}

\item The equivalence of norms gives us constants $D_1, D_2> 0$, such that
\begin{align}\label{p1}
&D_1 \|\delta \xi\|_{\max} \leq \|\delta \xi\|_{CW} \leq D_2 \|\delta \xi\|_{\max},
\end{align}
where $\|\cdot\|_{\max}$ denotes the maximum norm. 

\item Using the definitions of the Hamiltonian and the terms $\alpha_i$ (\ref{alpha}), we calculate
\begin{align}
	\max\{\alpha_1,\alpha_2\} \leq \frac{4\sqrt{2c}m_1^3}{m_3^2\sqrt{m_3}},
\end{align}
where $c > 0$ is the energy of the system.
\item Let $(i,i+1)$, $i = 0,1,2$, stand for a collision of ball $i$ with ball $i+1$, 
i.e. when $q_i = q_{i+1}$. When $i = 0$ the system experiences a collision with the floor.
\end{enumerate}
\begin{proof}[Proof of property (\ref{point2})]
The proof is based on the scheme given in \cite[p. 159 - 160]{LW92}.\\
We first give an estimate for $\|\delta \xi_{n_{2k-1}}\|_{CW}$ in 
between points $T^{n_{2k-1}}x$ and $T^{n_{2k-2}}x$. Without loss of generality we set $d_{T^{n_{2k-2}}x}T^{n_{2k-1} - n_{2k-2}}$ 
to be the product of $d\Phi_{1,2}d\Phi_{2,3}$ for every $k \in \mathbb{N}$. We estimate
\begin{eqnarray}\label{p2}
\|\delta \xi_{n_{2k-1}}\|_{CW} &=& \|M_1M_2 \delta \xi_{n_{2k-2}} + (M_1U_2 + U_1M_2^T)\delta \eta_{n_{2k-2}}\|_{CW}\nonumber\\
&\leq& \|\delta \xi_{n_{2k-2}}\|_{CW} + \|(M_1U_2 + U_1M_2^T) \delta \eta_{n_{2k-2}}\|_{CW}\nonumber\\
&\leq& \|\delta \xi_{n_{2k-2}}\|_{CW} + D_2 \|
\begin{pmatrix}
\alpha_1 & (1+\gamma_1)\alpha_2 + (1-\gamma_2)\alpha_1 \\
0 & \alpha_2
\end{pmatrix}
\delta \eta_{n_{2k-2}}\|_{\max}\nonumber\\
&\leq& \|\delta \xi_{n_{2k-2}}\|_{CW} + D_2 3 \max\{\alpha_1, \alpha_2\} \|\delta\eta_{n_{2k-2}}\|_{\max}\nonumber\\
&\leq& \|\delta \xi_{n_{2k-2}}\|_{CW} + \frac{D_212\sqrt{2c}m_1^3}{m_3^2\sqrt{m_3}} \|\delta\eta_{n_{2k-2}}\|_{\max}
\end{eqnarray}
We abbreviate the constant factor in the last inequality by
\begin{align}
K =  \frac{D_212\sqrt{2c}m_1^3}{m_3^2\sqrt{m_3}}.\nonumber
\end{align}
In between points $T^{n_{2k}}x$ and $T^{n_{2k-1}}x$ we have one of the following situations: Either a floor 
collision occurs, in which $\|\delta \xi_{n_{2k}}\|_{CW} = \|\delta \xi_{n_{2k}-1}\|_{CW}$ or a ball to ball 
collision occurs, in which 
$\|\delta \xi_{n_{2k}}\|_{CW} \leq \|\delta \xi_{n_{2k}-1}\|_{CW} + \|U_{\kappa(n_{2k}-1)}\delta \eta_{n_{2k}-1}
\|_{CW}$ (see (\ref{maps})). Thereby, $\kappa: \mathbb{N} \to \{1,2\}$, depends on the point and describes 
whether we have a (1,2) or (2,3) collision.
Combining this with (\ref{p2}) we obtain
\begin{align}\label{use}
\|\delta \xi_{n_{2k}}\|_{CW} \leq \|\delta \xi_{n_0}\|_{CW} + \sum_{i=1}^{k}\sum_{j \in I_i} \|U_{\kappa(n_{2i}-j)}\delta \eta_{n_{2i}-j}\|_{CW} + K\sum_{i=1}^{k}\|\delta \eta_{n_{2i-2}}\|_{CW},
\end{align}
where $\left\vert{I_i}\right\vert$ are the number of ball to ball collisions happening between points 
$T^{n_{2i}}x$ and $T^{n_{2i-1}}x$. If $\left\vert{I_i}\right\vert = 0$, we set 
$\|U_{\kappa(n_{2i})}\delta \eta_{n_{2i}}\|_{CW} = 0$.

The Cauchy-Schwarz inequality gives us
\begin{align}
q_{n_k} = \langle \delta \xi_{n_k}, \delta \eta_{n_k}\rangle \leq \|\delta \xi_{n_k}\|\|\delta \eta_{n_k}\|,\nonumber
\end{align}
which yields
\begin{align}\label{p4}
\|\delta \eta_{n_k}\| \geq \frac{q_{n_k}}{\|\delta \xi_{n_k}\|}.
\end{align}
From Proposition \ref{prop5} and the Cauchy-Schwarz inequality, we get
\begin{align}
q_{n_{2k+1}} &> q_{n_{2k}} + \Lambda \|\delta \eta_{n_{2k}}\|_{\max}^2\nonumber\\
&\geq q_{n_{2k}} +  \Lambda \|\delta \eta_{n_{2k}}\|_{\max} \frac{q_{n_{2k}}}{\|\delta \xi_{n_{2k}}\|_{\max}}\nonumber\\
&\geq q_{n_{2k}} \Bigl( 1 + \Lambda \frac{D_1\|\delta \eta_{n_{2k}}\|_{\max}}{\|\delta \xi_{n_{2k}}\|_{CW}}\Bigr).\nonumber
\end{align}
Utilizing the above, we estimate
\begin{align}
\frac{q_{n_{2k+1}}}{q_{n_{2k}}} \geq 1 + \frac{\Lambda D_1\|\delta \eta_{n_{2k}}\|_{\max}}{\|\delta \xi_{n_0}\|_{CW} + \sum_{i=1}^{k}\sum_{j \in I_i} \|U_{\kappa(n_{2i}-j)}\delta \eta_{n_{2i}-j}\|_{CW} + K\sum_{i=1}^{k}\|\delta \eta_{n_{2i-2}}\|_{CW}}. \nonumber
\end{align}

Let 
\begin{align}
r_k = \frac{\Lambda D_1\|\delta \eta_{n_{2k}}\|_{\max}}{\|\delta \xi_{n_0}\|_{CW} + \sum_{i=1}^{k}\sum_{j \in I_i} \|U_{\kappa(n_{2i}-j)}\delta \eta_{n_{2i}-j}\|_{CW} + K\sum_{i=1}^{k}\|\delta \eta_{n_{2i-2}}\|_{CW}}.
\nonumber
\end{align}
Without loss of generality assume\footnote{If the sum is infinite, then we can apply the argument in  
\cite[p. 159 - 160]{LW92} directly. The key point is, that we do not have control over this sum, so we 
assume the worst case, namely, its finiteness.} that the sum $\sum_{i=1}^{+\infty}\sum_{j \in I_i} \|U_{
\kappa(n_{2i}-j)}\delta \eta_{n_{2i}-j}\|_{CW}$ is finite. The only thing left to show is that $\sum_{k 
= 1}^{+ \infty}r_k = +\infty$.  In view of Proposition \ref{prop6}, it will follow once we prove, that 
$\sum_{i = 0}^{+\infty}\|\delta \eta_{n_{2i}}\|_{\max} = +\infty$. Assume on the contrary, that this is 
not true. 
Then, by (\ref{p2}), the sequence $(\|\delta \xi_{n_{2k-1}}\|_{CW})_{k \in \mathbb{N}}$ is bounded from 
above. This 
and (\ref{p4}) imply, that $(\|\delta \eta_{n_{2k}}\|_{\max})_{k \in \mathbb{N}}$ is bounded away from zero, which contradicts our assumption. This yields the unboundedness.
\end{proof}

\section{Particle falling in a wedge}\label{7}
Wojtkowski analyzed in \cite{W98} the hyperbolicity of a particle moving along parabolic trajectories 
in a variety of wedges. The particle is subject to constant acceleration and collides with the walls 
of the wedge. We adopt his notation and call such a system particle falling in a wedge- or, 
abbreviated, PW system.
Heuristically speaking, for special wedges, namely simple ones, the PW system is equivalent to a 
falling balls system (or FB system) with particular masses. 
After introducing the basic setup in three dimensions we are going to recall and expand some of 
the results in \cite{W98} in order to prove Theorem \ref{thm2} and property (\ref{point3}) 
in section \ref{8}.

Let $E$ be the three dimensional Euclidean space. For three linearly independent vectors 
$\{e_1,e_2,e_3\}$ we define the wedge $W(e_1,e_2,e_3) \subset E$ by
\begin{align}
W(e_1,e_2,e_3) = \{e \in E:\ e = \lambda_1e_1 + \lambda_2e_2 + \lambda_3e_3,\ \lambda_i \geq 0,\
i = 1,2,3\}.\nonumber
\end{align}
The set of vectors $\{e_1,e_2,e_3\}$ are called the generators of the wedge. We denote by 
$S(e_1,\ldots,e_i)$, $1 \leq i \leq 3$, the linear subspace spanned by the linearly independent vectors 
$\{e_1,\ldots,e_i\}$. A three dimensional wedge is called simple, if the generators can be ordered in 
such a way that the orthogonal projection of $e_1$ resp. $e_2$ onto $S(e_2,e_3)$ resp. $S(e_3)$ is a 
positive multiple of $e_2$ resp. $e_3$. The simplicity of a wedge can be verified with the following
\begin{prop}[Proposition 2.3, \cite{W98}]\label{prop2.3}
Let $\{e_1,e_2,e_3\}$ be a set of linearly independent unit vectors. The wedge $W(e_1,e_2,e_3)$ 
is simple if and only if 
\begin{subequations}
		\begin{eqnarray}
		&&\langle e_i, e_{i+1} \rangle > 0,\ i =1,2,\label{point4}\\
		&&\langle e_1, e_3 \rangle = \langle e_1, e_2 \rangle \langle e_2, e_3 \rangle.
		\label{point5}
		\end{eqnarray}
	\end{subequations} 
\end{prop}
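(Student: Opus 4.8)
The plan is to prove both implications of Proposition \ref{prop2.3} by translating the geometric definition of simplicity into statements about inner products, using the fact that an orthogonal projection onto a subspace is most naturally computed via Gram-Schmidt.

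First I would set up notation: write $a = \langle e_1, e_2\rangle$, $b = \langle e_2, e_3\rangle$, $c = \langle e_1, e_3\rangle$, all quantities well-defined since the $e_i$ are unit vectors. For the forward direction, assume $W(e_1,e_2,e_3)$ is simple with the generators already in the order guaranteeing the projection properties. The projection of $e_2$ onto $S(e_3)$ is $\langle e_2,e_3\rangle e_3 = b\, e_3$, and simplicity demands this be a \emph{positive} multiple of $e_3$, which forces $b > 0$. Next, the orthogonal projection of $e_1$ onto $S(e_2,e_3)$: since $\{e_2,e_3\}$ spans a plane, I would either use the Gram matrix or — more cleanly — note that $e_2$ and $e_2 - b\,e_3$ (the component of $e_2$ orthogonal to $e_3$, nonzero by linear independence) form an orthogonal basis of $S(e_2,e_3)$ after normalization. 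Writing the projection $\pi_{S(e_2,e_3)}e_1 = \lambda e_2 + \mu e_3$ and imposing $\langle e_1 - \pi e_1, e_2\rangle = 0$ and $\langle e_1 - \pi e_1, e_3\rangle = 0$ gives the linear system $\lambda + \mu b = a$, $\lambda b + \mu = c$. Simplicity says $\pi_{S(e_2,e_3)}e_1$ is a positive multiple of $e_2$, i.e. $\mu = 0$ and $\lambda > 0$; substituting $\mu = 0$ yields $\lambda = a$ and $c = \lambda b = ab$, which is exactly (\ref{point5}), and $\lambda = a > 0$ combined with the projection of $e_2$ onto $S(e_3)$ being a positive multiple of $e_3$ — wait, I also need $\langle e_1,e_2\rangle > 0$, which follows from $a = \lambda > 0$. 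So (\ref{point4}) and (\ref{point5}) both hold.

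For the converse, assume (\ref{point4}) and (\ref{point5}) hold for some ordering of the unit generators; I would show this same ordering witnesses simplicity. Since $b = \langle e_2,e_3\rangle > 0$, the projection of $e_2$ onto $S(e_3)$ is $b\,e_3$, a positive multiple of $e_3$. Then I solve the same $2\times 2$ system $\lambda + \mu b = a$, $\lambda b + \mu = c$ for the projection coefficients of $e_1$ onto $S(e_2,e_3)$; its determinant is $1 - b^2 \neq 0$ because $|b| < 1$ (strict, since $e_2, e_3$ are linearly independent unit vectors, so Cauchy–Schwarz is strict). Plugging in $c = ab$ gives $\mu(1-b^2) = c - ab = 0$, hence $\mu = 0$, and then $\lambda = a > 0$ by (\ref{point4}). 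Thus $\pi_{S(e_2,e_3)}e_1 = a\,e_2$ is a positive multiple of $e_2$, so the wedge is simple.

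I do not anticipate a genuine obstacle here; the proposition is essentially a linear-algebra identity. The one point requiring a little care is making sure the \emph{ordering} of generators in the definition of simplicity matches the ordering in which (\ref{point4})–(\ref{point5}) are stated, and verifying the strict inequality $|b| < 1$ so the $2 \times 2$ system is invertible — both are immediate from linear independence of unit vectors. A secondary subtlety is that the definition of "simple" requires the projection of $e_1$ onto $S(e_2,e_3)$ to be a positive multiple of $e_2$, which is a two-part condition ($\mu = 0$ \emph{and} $\lambda > 0$); I would make sure to extract both parts and match them against (\ref{point5}) and (\ref{point4}) respectively. Everything else is routine bookkeeping with inner products.
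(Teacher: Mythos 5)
Your proof is correct and complete. Note that the paper itself contains no proof of this statement---it is imported verbatim from Wojtkowski \cite{W98} (Proposition 2.3 there)---so there is no in-paper argument to compare against; your normal-equations computation for the projection onto $S(e_2,e_3)$ is the standard one, and you handle the two genuinely delicate points correctly: invertibility of the $2\times 2$ Gram system via the strict inequality $\lvert\langle e_2,e_3\rangle\rvert<1$ (from linear independence of unit vectors), and the fact that ``positive multiple of $e_2$'' is the two-part condition $\mu=0$, $\lambda>0$, which you match against (\ref{point5}) and (\ref{point4}) respectively. Your explicit assumption that the listed order of the generators is the one witnessing simplicity is the intended reading of the statement (the ordering ambiguity lies in the formulation, not in your argument), so no gap remains.
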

The angles $\alpha_i = \sphericalangle (e_i,e_{i+1})$, $i = 1,2$,
completely determine the geometry of the wedge. In a simple wedge the angles satisfy 
$0 < \alpha_i < \frac{\pi}{2}$ and if $\{e_1,e_2,e_3\}$ are unit vectors we have
\begin{align}\label{2.1}
\cos \alpha_i = \langle e_i, e_{i+1} \rangle.
\end{align} 
We also give another geometric characterization of the wedge by introducing a second pair of angles $
\beta_1$, $\beta_2$. 
Thereby, $\beta_i$ is the angle between subspaces $S(e_i, e_{i+2})$ and $S(e_{i+1}, e_{i+2})$, 
where for $i = 2$, we set $\beta_2 = \alpha_2$. If the wedge is simple, they satisfy 
$0 < \beta_i < \frac{\pi}{2}$. 
The relation between $\beta_1$ and $\alpha_1, \alpha_2$ is given by
\begin{align}\label{2.2}
\tan \beta_1 = \frac{\tan \alpha_1}{\sin \alpha_2}.
\end{align}

Consider the FB system from Section 2. Its Hamiltonian is given by 
$H(q,p) = \frac{1}{2}\langle Kp,p \rangle + \langle c_1, q\rangle$, 
$K = diag(\frac{1}{m_1},\frac{1}{m_2},\frac{1}{m_3})$, 
$c_1 = (m_1, m_2, m_3)$. Thereby, $K$ is the diagonal matrix with 
diagonal entries $\frac{1}{m_1},\frac{1}{m_2},\frac{1}{m_3}$. The unit vectors
\begin{align}
e_1 = \frac{1}{\sqrt{3}}
\begin{pmatrix}
1\\
1\\
1
\end{pmatrix},
e_2 = \frac{1}{\sqrt{2}}
\begin{pmatrix}
0\\
1\\
1
\end{pmatrix},
e_3 = 
\begin{pmatrix}
0\\
0\\
1
\end{pmatrix}\nonumber
\end{align}
span the configuration space
\begin{align}
W_q(e_1,e_2,e_3) = \{(q_1,q_2,q_3) \in \mathbb{R}^3: 0 \leq q_1 \leq q_2 \leq q_3\}.\nonumber
\end{align}
It carries the natural Riemannian metric given by the kinetic energy 
$\langle K \cdot, \cdot \rangle$. We subject the system to the coordinate transformation
\begin{align}\label{localcoordinates} 
x_i = \sqrt{m_i}q_i,\ w_i = \frac{p_i}{\sqrt{m_i}},
\end{align} 
and obtain the Hamiltonian 
$H(x,w) = \frac{1}{2}\langle w,w \rangle + \langle c_2,x \rangle$, 
$c_2 = (\sqrt{m_1}, \sqrt{m_2}, \sqrt{m_3})$. The natural Riemannian metric in these 
coordinates is the standard Euclidean inner product. The new generators of length one are
\begin{align}\label{h}
h_1 = \frac{1}{\sqrt{M_1}}
\begin{pmatrix}
\sqrt{m_1}\\
\sqrt{m_2}\\
\sqrt{m_3}
\end{pmatrix},
h_2 = \frac{1}{\sqrt{M_2}}
\begin{pmatrix}
0\\
\sqrt{m_2}\\
\sqrt{m_3}
\end{pmatrix},
h_3 = 
\begin{pmatrix}
0\\
0\\
1
\end{pmatrix},
\end{align}
where $M_i = m_i + \cdots + m_3$, $i = 1,2$. The configuration space changes to 
\begin{align}\label{Wx}
W_x(h_1,h_2,h_3) = \{(x_1,x_2,x_3) \in \mathbb{R}^3:\ 0 \leq \frac{x_1}{\sqrt{m_1}} 
\leq \frac{x_2}{\sqrt{m_2}} \leq \frac{x_3}{\sqrt{m_3}}\}.
\end{align}
With respect to the Euclidean inner product we have 
\begin{align}
\langle h_i, h_j\rangle = \frac{\sqrt{M_j}}{\sqrt{M_i}},\ 1 \leq i < j \leq 3,\nonumber
\end{align}
which immediately yields properties (\ref{point4}), (\ref{point5}) from Proposition \ref{prop2.3}, 
proving that $W_x(h_1,h_2,h_3)$ is a simple wedge. Further, using properties (\ref{2.1}) and 
(\ref{2.2}) we get a direct link between the angles 
characterizing the wedge and the masses of the FB system
\begin{align}\label{3.4}
\cos^2 \alpha_i = \frac{M_{i+1}}{M_i},\ \sin^2 \alpha_i = \frac{m_i}{M_i},\ 
\tan^2 \beta_i = \frac{m_i}{m_{i+1}}.
\end{align}
Notice, that the direction of the 
acceleration vector is along the first generator. We arrived at the important conclusion, that a PW 
system in a simple wedge with acceleration vector along the first generator is equivalent to a FB 
system with appropriate masses.

\subsection{Wide wedges}
\begin{definition}
A three dimensional wedge with generators $\{g_1,g_2,g_3\}$ is wide if the angle of the generators 
exceeds $\pi/2$, i.e. $\langle g_i,g_j \rangle < 0,$ $1 \leq i < j \leq 3$.
\end{definition}

Consider a PW system in a simple wedge $W_x(h_1,h_2,h_3)$ (\ref{Wx}). 
We will unfold $W_x(h_1,h_2,h_3)$ to a wide wedge by continuously reflecting 
it in the faces, which 
are equipped with the first generator, i.e. $W(h_1,h_2)$ and $W(h_1,h_3)$.  
It is not hard to see, that this 
procedure creates a wide wedge if and only if the angle between the subspaces $S(h_1,h_2)$ 
and $S(h_1,h_3)$ is exactly\footnote{Otherwise the unfolded simple wedges would overlap.} 
$\pi/3$. This translates to the condition
\begin{align}\label{angle}
\frac{1}{2} = \cos \frac{\pi}{3} = \langle n_{S(h_1,h_2),0}, n_{S(h_1,h_3),0} \rangle,
\end{align}
where $n_{S(h_1,h_2),0}$ resp. $n_{S(h_1,h_3),0}$ are the unit normal vectors of the 
subindexed subspace. Using (\ref{h}) in (\ref{angle}) we obtain for the appropriate masses of the 
corresponding FB system 
\begin{align}\label{masses}
2\sqrt{m_1}\sqrt{m_3} = \sqrt{m_1+m_2}\sqrt{m_2+m_3}.
\end{align}
In this way we obtain new generators $\{g_1,g_2,g_3\}$ and the wedge $W_x(g_1,g_2,g_3)$, 
which consists exactly of six simple wedges. 
With the help of (\ref{h}) and elementary linear algebra it follows rather easily that the wedge 
$W_x(g_1,g_2,g_3)$ is wide. 

The two dimensional inner faces of the simple wedges possessing the first generator $h_1$ correspond 
to a collision of two balls in the associated FB system. 
When the particle hits one of the inner faces we allow the particle to pass through the face to the 
adjacent wedge. 

A collision of the particle with one of the faces of the wide wedge 
corresponds to a collision with the floor in the 
associated FB system. In this case, we do not allow the particle to pass through the face, 
but instead reflect the velocity vector across the face by using $w_1^+=-w_1^-$. 

Since the trajectory is parabolic, a natural question to ask is, whether or not grazing collisions 
can occur. For our purposes we will confine ourselves to the simple wedge $W_x(h_1,h_2,h_3)$. 
The definition of a grazing collision is as follows
\begin{definition}
A collision of the trajectory $x(t)$, at time $t_0$, with one of the faces of the simple wedge 
$W_x(h_1,h_2,h_3)$ is grazing, if the velocity vector $\dot{x}(t_0)$ lies in the face of 
collision.
\end{definition}
The next result gives equivalent conditions of a grazing collision 
with one of the faces possessing the first generator.
\begin{prop}\label{prop4}
	Let $t_0 < t_1$ be consecutive collision times of the trajectory in the simple wedge 
	$W_x(h_1,h_2,h_3)$ and assume that $x(t_1) \in W_x(h_1,h_2)$ or $x(t_1) \in W_x(h_1,h_3)$.
The following statements are equivalent:
\begin{align}
	&\text{1. A collision with the face } W_x(h_1,h_3)\ \text{resp. } W_x(h_1,h_2), 
	\text{ at time } t_1, \text{ is grazing.}
	\nonumber\\
	&\text{2. The differences } \frac{w_1^+(t_0)}{\sqrt{m_1}} - \frac{w_2^+(t_0)}{\sqrt{m_2}}\ \text{
	resp. } \frac{w_2^+(t_0)}{\sqrt{m_2}} - \frac{w_3^+(t_0)}{\sqrt{m_3}}\ \text{are equal to zero}.
	\nonumber\\
	&\text{3. The trajectory segment } \{x(t): t \in [t_0,t_1]\}\ \text{is confined to } W_x(h_1,h_3)\ 
	\text{resp. }	W_x(h_1,h_2).\nonumber
\end{align}
\end{prop}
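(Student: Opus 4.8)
The plan is to prove the three-way equivalence by establishing a cycle of implications, say $1 \Rightarrow 3 \Rightarrow 2 \Rightarrow 1$, using the explicit parabolic form of the trajectory together with the coordinate description (\ref{localcoordinates}) of the wedge faces. First I would set up coordinates: in the $(x,w)$ variables the acceleration is the constant vector $-c_2$, so the trajectory is $x(t) = x(t_0) + (t-t_0)\dot x(t_0) - \tfrac{1}{2}(t-t_0)^2 c_2$, and the faces $W_x(h_1,h_2)$, $W_x(h_1,h_3)$ are exactly the sets where the corresponding pair of rescaled coordinates coincide, i.e. $x_i/\sqrt{m_i} = x_{i+1}/\sqrt{m_{i+1}}$ — which is precisely the $(q_i = q_{i+1})$ collision condition in the FB picture. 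The key observation, which I would record first, is that the function $f(t) = q_i(t) - q_{i+1}(t)$ (equivalently the signed distance of $x(t)$ to the hyperplane through the face, up to a positive constant) is itself a quadratic in $t$, and because both balls feel the same rescaled acceleration in these variables, the \emph{quadratic coefficient cancels}: along a free segment between two collisions, $q_i(t) - q_{i+1}(t)$ is in fact \textbf{linear} in $t$ (the parabolas $q_i(t)$ and $q_{i+1}(t)$ have the same leading term $-t^2/2$). This linearity is the engine behind the whole proposition.

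Granting that, the implications become short. For $2 \Rightarrow 3$: if the relevant velocity difference at time $t_0$ vanishes, then the linear function $q_i(t) - q_{i+1}(t)$ has zero slope; since it also vanishes at $t_0$ (the particle is on a face at $t_0$, or at least the relevant coordinates agree there — here I would use that $x(t_0)$ lies on some face of the simple wedge, and check the bookkeeping of which face) it vanishes identically on $[t_0,t_1]$, so the trajectory segment lies in the face $W_x(h_1,h_3)$ resp. $W_x(h_1,h_2)$, giving statement $3$. For $3 \Rightarrow 1$: if the whole segment is confined to the face, then in particular the velocity $\dot x(t)$ is tangent to that (affine) face for all $t \in [t_0,t_1]$, and at $t_1$ the collision happens on a face containing $h_1$; a confined segment forces the collision at $t_1$ to be with exactly the complementary face containing $h_1$ (this is where I would unwind the geometry: a segment lying in $W_x(h_1,h_3)$ can only next meet the boundary along $W_x(h_1,h_2)$ or $W_x(h_1,h_3)$ itself, and the hypothesis $x(t_1) \in W_x(h_1,h_2)$ or $W_x(h_1,h_3)$ pins it down), and then $\dot x(t_1)$ lies in the face of collision, i.e. the collision is grazing. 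For $1 \Rightarrow 2$: a grazing collision at $t_1$ means $\dot x(t_1)$ lies in the face, hence $q_i(t_1) - q_{i+1}(t_1) = 0$ with zero derivative at $t_1$; by linearity of $q_i - q_{i+1}$ on the segment the derivative is constant, so it was already zero at $t_0$, which is statement $2$ after translating back through (\ref{localcoordinates}) and the collision law (\ref{collisionqp}) relating $p^+(t_0)$ to the actual pre-/post-collision velocities.

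The one genuinely delicate point — the \textbf{main obstacle} — is the indexing/geometry bookkeeping: one must be careful about \emph{which} face the particle sits on at $t_0$, which pair $(i,i+1)$ of coordinates is constrained, and hence which of $W_x(h_1,h_2)$ or $W_x(h_1,h_3)$ appears in each half of the "resp." statement. The cross-pairing in the proposition ("collision with $W_x(h_1,h_3)$ \emph{resp.} $W_x(h_1,h_2)$" matched against "$w_1/\sqrt{m_1} - w_2/\sqrt{m_2}$ \emph{resp.} $w_2/\sqrt{m_2} - w_3/\sqrt{m_3}$") is not a typo: a collision with the face $W_x(h_1,h_3)$, i.e. $q_1 = q_3$ in a degenerate configuration forced by confinement, corresponds via the standing order $q_1 \le q_2 \le q_3$ to $q_1 = q_2 = q_3$ on that segment, which is controlled by the $q_1 - q_2$ (equivalently $w_1/\sqrt{m_1} - w_2/\sqrt{m_2}$) difference being frozen. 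I would make this precise by drawing the two-dimensional picture of the wedge cross-section, labelling the three edges $h_1, h_2, h_3$ and the three faces, and tracking a segment that starts on one edge: such a segment, being a chord, can only exit through an adjacent face, and the linearity established above guarantees that "starts in the closure of a face with zero transverse velocity" is the same as "stays in that face." Everything else — the parabola formula, the cancellation of the quadratic term, Cauchy–Schwarz-free elementary algebra — is routine and I would not belabor it.
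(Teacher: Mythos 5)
Your analytic engine is the right one, and it is essentially the paper's: the paper's proof of $2\Rightarrow 3$ is exactly the linearity you isolate (packaged as the collision--time formula $t_1-t_0=\bigl(x_2(t_0)/\sqrt{m_2}-x_1(t_0)/\sqrt{m_1}\bigr)/\bigl(w_1^+(t_0)/\sqrt{m_1}-w_2^+(t_0)/\sqrt{m_2}\bigr)$, whose numerator and denominator are the constant slope and offset of the relative coordinate), and its proof of $1\Rightarrow 2$ is your observation that a grazing collision means $w^-(t_1)$ is parallel to the face, which together with the invariance of the velocity differences under free flight gives the vanishing at $t_0$. The genuine problem is the point you yourself flag as the main obstacle and then resolve incorrectly: the face dictionary. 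Writing $x=\lambda_1h_1+\lambda_2h_2+\lambda_3h_3$ with the generators (\ref{h}), one finds $x_2/\sqrt{m_2}-x_1/\sqrt{m_1}=\lambda_2/\sqrt{M_2}$ and $x_3/\sqrt{m_3}-x_2/\sqrt{m_2}=\lambda_3/\sqrt{m_3}$, so the two-dimensional face $W_x(h_1,h_3)$ (where $\lambda_2=0$) is precisely $\{q_1=q_2\}$, a $(1,2)$-collision face, and $W_x(h_1,h_2)$ (where $\lambda_3=0$) is $\{q_2=q_3\}$; the floor face is $W_x(h_2,h_3)=\{q_1=0\}$. It is not the set $q_1=q_3$, and confinement to it does not force $q_1=q_2=q_3$: a generic point of $W_x(h_1,h_3)$ has $q_1=q_2<q_3$. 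This coefficient computation is the entire explanation of the ``resp.'' pairing; your degenerate-configuration story, taken literally, turns statement 3 into confinement to the ray spanned by $h_1$, under which your claimed $2\Rightarrow 3$ is false (vanishing of $v_1-v_2$ gives no control on $q_2-q_3$ along the segment).

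There is a second, smaller gap in your $2\Rightarrow 3$: you anchor the vanishing of the (constant, by linearity) difference $q_1-q_2$ at $t_0$ by appealing to ``$x(t_0)$ lies on some face''. That face need not be $\{q_1=q_2\}$ --- it can be the floor face or $\{q_2=q_3\}$ --- so $q_1(t_0)=q_2(t_0)$ is not available a priori. The correct anchor is $t_1$: in the branch under consideration the hypothesis is that the collision at time $t_1$ is with $W_x(h_1,h_3)$, so $q_1(t_1)=q_2(t_1)$, and a constant function vanishing at $t_1$ vanishes on all of $[t_0,t_1]$ (equivalently, as in the paper, finiteness of $t_1-t_0$ forces the numerator of the collision-time formula to vanish when the denominator does). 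With the face identification corrected and the anchor moved to $t_1$, your cycle $1\Rightarrow 2\Rightarrow 3\Rightarrow 1$ goes through and coincides with the paper's argument.
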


\begin{proof}
	$1 \Rightarrow 2$:\\
 Without loss of generality assume that 
$x(t_0) \in W_x(h_1,h_2)$ or $x(t_0) \in W_x(h_2,h_3)$. Further, let the particle experience 
a grazing collision with the face $W_x(h_1,h_3)$ at time $t_1$. In a grazing collision 
the velocity
\begin{align}
w^-(t_1) =
\begin{pmatrix}
-\sqrt{m_1}(t_1 - t_0) + w_1^+(t_0) \\
-\sqrt{m_2}(t_1 - t_0) + w_2^+(t_0) \\
-\sqrt{m_3}(t_1 - t_0) + w_3^+(t_0) 
\end{pmatrix}\nonumber
\end{align}
is parallel to the face 
\begin{align}
	W_x(h_1,h_3) = \{(x_1,x_2,x_3) \in W_x(h_1,h_2,h_3):\ \frac{x_1}{\sqrt{m_1}} = \frac{x_2}{\sqrt{m_2}}\}.
	\nonumber
\end{align}
This is equivalent to 
\begin{align}
\frac{w_1^+(t_0)}{\sqrt{m_1}} = \frac{w_2^+(t_0)}{\sqrt{m_2}}.\nonumber
\end{align}
The argument for a grazing collision with the face $W_x(h_1,h_2)$ is exactly the same.\\
$2 \Rightarrow 3$:\\
Without loss of generality assume again that 
$x(t_0) \in W_x(h_1,h_2)$ or $x(t_0) \in W_x(h_2,h_3)$ and let the particle collide with the face 
$W_x(h_1,h_3)$ at time $t_1$.
From the Hamiltonian equations, we calculate the first collision time
\begin{align}
t_1 - t_0 = \frac{x_2(t_0) / \sqrt{m_2} - x_1(t_0) / \sqrt{m_1}}
{w_1^+(t_0) / \sqrt{m_1} - w_2^+(t_0) / \sqrt{m_2}}.\nonumber
\end{align}
Since the energy is fixed, $t_1 - t_0 < \infty$. It follows, that if 
$w_1^+(t_0) / \sqrt{m_1} - w_2^+(t_0) / \sqrt{m_2} \to 0$, then 
$x_2(t_0) / \sqrt{m_2} - x_1(t_0) / \sqrt{m_1} \to 0$ (at least) with the same rate. 
Thus, in case of equal velocities, we always have 
$x_1(t_0) / \sqrt{m_1} = x_2(t_0) / \sqrt{m_2}$, which implies that the trajectory moves inside 
the face $W_x(h_1,h_3)$. \\
The argument for $w_2^+(t_0) / \sqrt{m_2} - w_3^+(t_0) / \sqrt{m_3} = 0$ is exactly the same.\\
$3 \Rightarrow 1$:\\
This direction is immediate.
\end{proof}

\subsection{Projection} The Hamiltonian equations imply that the flow is an inverted parabola. 
Let $[t_0,t_c]$ be the time from one collision to the next. 
We define the planar subspace
\begin{align}\label{planar}
\mathsf{P}_{x([t_0,t_c])} = S(\dot{x}(t_1),\dot{x}(t_2)),\ 
\dot{x}(t_1) \neq \dot{x}(t_2),\ t_0 \leq t_1 < t_2 \leq t_c.
\end{align}
The movement of the parabolic trajectory is confined to the planar subspace, i.e.
\begin{align}
\{x(t):\ t \in [t_0,t_c]\} \subset \mathsf{P}_{x([t_0,t_c])}.\nonumber
\end{align}
The acceleration vector $a =  \ddot{x}(t)$ is always element of 
$\mathsf{P}_{x([t_0,t_c])}$: Set
\begin{align}
n_{\mathsf{P}}(t) = \dot{x}(t) \times \ddot{x}(t),\ \|\dot{x}(t)\| = \|\ddot{x}(t)\| = 1,\  \forall\ t \in [t_0,t_c].\nonumber
\end{align}
The vector $n_{\mathsf{P}}(t)$ has unit length and since the trajectory moves inside a planar subspace, 
$n_{\mathsf{P}}(t)$ is constant for all choices $t \in [t_0, t_c]$. Thus, $\dot{n}_{\mathsf{P}}(t)= 0$. Observe, that 
\begin{align}\label{normal}
\langle n_{x(t)}, \dot{x}(t) \rangle = 0,\ \forall\ t \in [t_0,t_c],
\end{align}
where $n_{x(t)}$ is a normal vector to $\dot{x}(t)$ at point $x(t)$. Differentiating (\ref{normal}) 
gives
\begin{align}
\langle n_{x(t)}, \ddot{x}(t) \rangle = -\langle \dot{n}_{x(t)}, \dot{x}(t) \rangle.\nonumber
\end{align}
Substituting $\ddot{x}(t)$ with $a$ and $n_{x(t)}$ with $n_{\mathsf{P}}(t)$ gives
\begin{align}
 \langle a, n_{\mathsf{P}}(t)\rangle = - \langle \dot{n}_{\mathsf{P}}(t), \dot{x}(t) \rangle = 0.\nonumber
\end{align}

We will use this fact to project the configuration space $W_x(g_1,g_2,g_3)$ along the first generator 
$h_1$ 
to the plane spanned by the normal vectors $n_{S(h_1,h_2)}$, $n_{S(h_1,h_3)}$ of the subspaces 
$S(h_1,h_2)$, $S(h_1,h_3)$. The projected configuration space becomes an 
equilateral triangle. Its algebraic form is 
given by 
\begin{align}\label{triangle}
\bigtriangleup:\ \sqrt{m_1}x_1 + \sqrt{m_2}x_2 + \sqrt{m_3}x_3 = d,\ d > 0,
\end{align}
where $d$ determines its displacement from the origin. Since the acceleration vector lies in the 
plane spanned by two velocity vectors of the flow, the parabola projected to $\bigtriangleup$ becomes a 
straight line (see Figure 1).\\
\begin{center}
\begin{tikzpicture}[scale=1.7]
	\draw (0,0) -- (2,0);
	\draw (0,0) -- ($(1,{sqrt(3)})$);
	\draw (2,0) -- ($(1,{sqrt(3)})$);
    \draw (0,0) -- ($(1.5,{sqrt(3)*0.5})$) node [right] {$h_2$};
    \draw (2,0) -- ($(0.5,{sqrt(3)*0.5})$) node [left] {$h_3$};
    \draw ($(1,{sqrt(3)})$) -- (1,0) node [below] {$h_4$};
    \draw[dashed,->] (0.4,0) -- ($(0.8,{sqrt(3)*0.8})$) -- (0.9,1.1);
    \node[below left] at (0,0) {$g_3$};
    \node[below right] at (2,0) {$g_1$};
    \node[above] at ($(1,{sqrt(3)})$) {$g_2$};
    \node [above right] at (0.97,0.65) {$h_1$};
    \node [align=left, below right] at (-0.2,-0.5) {Figure 1: The projected parabola\\ moving inside the projected\\ 
    	configuration space $\bigtriangleup$.};
\end{tikzpicture}
\qquad
\begin{tikzpicture}[scale=1.7]
\draw (0,0) -- (2,0);
\draw (0,0) -- ($(1,{sqrt(3)})$);
\draw (2,0) -- ($(1,{sqrt(3)})$);
\draw (0,0) -- ($(1.5,{sqrt(3)*0.5})$) node [right] {$(2,3)$};
\draw (2,0) -- ($(0.5,{sqrt(3)*0.5})$) node [left] {$(2,3)$};
\draw ($(1,{sqrt(3)})$) -- (1,0) node [below] {$(2,3)$};
\node[below left] at (0,0) {$(1,2)$};
\node[below right] at (2,0) {$(1,2)$};
\node[above] at ($(1,{sqrt(3)})$) {$(1,2)$};
\draw [blue] (0.8,0) -- ($(0.7,{sqrt(3)*0.7})$) node [above left] {I.};
\draw [cyan] (0.8,0) -- ($(1.1,{-sqrt(3)*1.1+2*sqrt(3)})$) node [above right] {II.};
\draw [green] (0.8,0) -- ($(1.4,{-sqrt(3)*1.4+2*sqrt(3)})$) node [above right] {III.};
\draw [teal] (0.8,0) -- ($(1.7,{-sqrt(3)*1.7+2*sqrt(3)})$) node [right] {IV.};
\node [align=left, below right] at (0,-0.5) {Figure 2: An example of \\ cases I-IV.\\};
\end{tikzpicture}
\end{center}

\subsection{Proper alignment in wide wedges}\label{special}
The idea to unfold the simple wedge $W_x$ (\ref{Wx}) into a wide wedge stems from Wojtkowski \cite{W16}. It is evident, that 
the triple collision states in the configuration space, which are represented by the first generator $h_1$, disappear in the 
wide wedge. More precise, each trajectory, which passes through the spot where $h_1$ was, has a smooth continuation. 
Since the triple collision singularity manifold is the only obstacle in proving the proper alignment condition, the system 
of a particle falling in the wide wedge, obtained for the special mass configuration (\ref{masses}), satisfies 
the proper alignment condition. However, in the simple wedge $W_x$, once a trajectory hits the corner $h_1$ it is impossible 
to continue it uniquely, since it has two images after the singular collision. The latter holds for any possible mass 
configuration. Therefore, the validity of the proper alignment condition cannot be immediately deduced from the dynamics of the 
wide wedge. It remains unknown at the moment (see Subsection \ref{state} for more details).

\section{Strict unboundedness - Part II}\label{8}
Consider a PW system in the simple wedge $W_x(h_1,h_2,h_3)$ (\ref{Wx}) and mass 
restrictions given by (\ref{masses}). Due to the results of the last section we reflect the simple 
wedge in its faces possessing the first generator to obtain a wide wedge $W_x(g_1,g_2,g_3)$. 

For the strict unboundedness, it remains to prove Theorem \ref{thm2} and 
property (\ref{point3}) from Section 6. The latter was already proven as part of the Main Theorem 6.6 
in \cite[p. 327 - 331]{W98}. In essence, Wojtkowski proved, that every orbit will eventually hit every 
face of the wide wedge. Subsequently, this yields all necessary collisions for eventually mapping 
the Lagrangian subspaces $L_1$ and $L_2$ inside the interior of the contracting cone 
field\footnote{One can directly calculate, that all $(\delta \xi, 0) \in L_1$ get mapped into $\mathcal{C}(x)$ 
after at most three returns to the floor and all $(0, \delta \eta) \in L_2$ as soon as the trajectory 
experiences the first two ball to ball collisions.}.

To prove Theorem \ref{thm2} we first establish how many different collisions, involving all the 
balls, are possible in between two consecutive collisions of the lowest ball with the floor. 
Using the projection to $\bigtriangleup$ (see (\ref{triangle})), we encounter the following four different possibilities 
(see Figure 2)
\begin{align}
\begin{array}{ll}\label{cases}
\text{I.} &(0,1) \longrightarrow (1,2) \longrightarrow (2,3) \longrightarrow (0,1)\\[0.1cm]
\text{II.} &(0,1) \longrightarrow (1,2) \longrightarrow (2,3) \longrightarrow (1,2) \longrightarrow (0,1)
\\[0.1cm]
\text{III.} &(0,1) \longrightarrow (2,3) \longrightarrow (1,2) \longrightarrow (0,1)\\[0.1cm]
\text{IV.} &(0,1) \longrightarrow (2,3) \longrightarrow (1,2) \longrightarrow (2,3) \longrightarrow (0,1)
\end{array}
\end{align}

\begin{proof}[Proof of Theorem \ref{thm2}]
Since every collision in the FB system happens infinitely often we distinguish between two sets of orbits 
$O_1$ and $O_2$:
\begin{enumerate}
\item $O_1$ consists of all the orbits where at least one of the cases I-IV above happens infinitely often.
\item $O_2$ consists of all the orbits where each case I-IV happens at most finitely often. 
\end{enumerate}
$O_2$ can be considered as a special case, where in between two consecutive collisions of the lowest ball 
with the floor occurs at most one ball to ball collision. We begin with $O_1$. Due to symmetry 
it is enough to consider only the first two cases (\ref{cases}). 
Without loss of generality we start at time $t_0$ 
on the face $W_x(h_4,g_3)$. In case I, the order of faces crossed by the trajectory is 
$W_x(h_1,g_3)$, $W_x(h_1,h_3)$ before the particle hits the last face $W_x(h_3,g_2)$. In case II, 
the trajectory crosses faces $W_x(h_1,g_3)$, $W_x(h_1,h_3)$, $W_x(h_1,g_2)$ before it reaches the last 
face $W_x(h_2,g_2)$. We compactly display the latter information as
\begin{align}
\begin{array}{ll} 
\mathbf{Case\ I.} & W_x(g_3,h_4) \longrightarrow W_x(h_1,g_3) \longrightarrow W_x(h_1,h_3) 
\longrightarrow W_x(h_3,g_2),\nonumber\\[0.2cm]
\mathbf{Case\ II.} & W_x(g_3,h_4) \longrightarrow W_x(h_1,g_3) \longrightarrow W_x(h_1,h_3) 
\longrightarrow W_x(h_1,g_2) \longrightarrow W_x(h_2,g_2).\nonumber
\end{array}
\end{align}
\textbf{Case I.} Let $t_1 < t_2 < t_3$, 
be the collision times with the faces $W_x(h_1,g_3)$, $W_x(h_1,h_3)$ and $W_x(h_3,g_2)$. 
When the particle crosses the face $W_x(h_1,g_3)$ resp. $W_x(h_1,h_3)$, we have
\begin{align}\label{difference}
\frac{w_1^-(t_1)}{\sqrt{m_1}} - \frac{w_2^-(t_1)}{\sqrt{m_2}} > 0\quad \text{resp.}\quad 
\frac{w_2^-(t_2)}{\sqrt{m_2}} - \frac{w_3^-(t_2)}{\sqrt{m_3}} > 0.
\end{align}
The velocity differences are invariant in between collision, i.e.
\begin{align}\label{invariant}
\begin{array}{rcl}
\dfrac{w_1^-(t_1)}{\sqrt{m_1}} - \dfrac{w_2^-(t_1)}{\sqrt{m_2}} &=& 
\dfrac{w_1^+(t_0)}{\sqrt{m_1}} - \dfrac{w_2^+(t_0)}{\sqrt{m_2}},\\[0.3cm]
\dfrac{w_2^-(t_2)}{\sqrt{m_2}} - \dfrac{w_3^-(t_2)}{\sqrt{m_3}} &=& 
\dfrac{w_2^+(t_1)}{\sqrt{m_2}} - \dfrac{w_3^+(t_1)}{\sqrt{m_3}},
\end{array}
\end{align}
Due to Proposition \ref{prop4}, the quantities (\ref{difference}) are arbitrarily close to zero if and
only if the collisions with the respective faces are arbitrarily close to grazing ones.
The first collision with the face $W_x(h_1,g_3)$ is almost grazing if and only if 
the planar subspace $\mathsf{P}_{x([t_0,t_3])}$ (see (\ref{planar})) is almost perpendicular to 
the face $W_x(g_1,g_2)$, i.e. $x(t_3) \in W_x(g_1,g_2)$. But this contradicts the fact of 
the trajectory reaching the last face $W_x(h_3,g_2)$. Therefore, there exists $\psi_1 > 0$, such that 
for all $x(t_0) \in W_x(h_4,g_3)$:
\begin{align}
	\sphericalangle(\mathsf{P}_{x([t_0,t_3])},W_x(h_1,g_3)) > \psi_1.
\end{align}

The second collision with the face $W_x(h_1,h_3)$ is almost grazing if and only if 
$\mathsf{P}_{x([t_0,t_c])}$ is almost perpendicular to the face $W_x(g_2,g_3)$, i.e. 
$x(t_0) \in W_x(h_4,g_1)$. 
But this contradicts $x(t_0) \in W_x(h_4,g_3)$. Therefore, there exists $\psi_2 > 0$, such that for all 
$x(t_0) \in W_x(h_4,g_3)$:
\begin{align}
	\sphericalangle(\mathsf{P}_{x([t_0,t_3])},W_x(h_1,h_3)) > \psi_2.
\end{align}
Using the projection along the first generator (see (\ref{triangle}) and Figure 1) we conclude, that 
$\psi_1 = \psi_2 = \pi / 6$.

\textbf{Case II.} Let $t_1 < t_2 <t_3 < t_4$ be the collision times of the particle 
with the faces $W_x(h_1,g_3)$, $W_x(h_1,h_3)$, $W_x(h_1,g_2)$ and $W_x(h_2,g_2)$. 
It is sufficient to prove that either
\begin{align}\label{first}
\frac{w_1^-(t_1)}{\sqrt{m_1}} - \frac{w_2^-(t_1)}{\sqrt{m_2}}\quad \text{and}\quad 
\frac{w_2^-(t_2)}{\sqrt{m_2}} - \frac{w_3^-(t_2)}{\sqrt{m_3}}
\end{align}
or
\begin{align}\label{second}
\frac{w_2^-(t_2)}{\sqrt{m_2}} - \frac{w_3^-(t_2)}{\sqrt{m_3}}\quad \text{and}
\quad 
\frac{w_1^-(t_3)}{\sqrt{m_1}} - \frac{w_2^-(t_3)}{\sqrt{m_2}}
\end{align}
are uniformly bounded away from zero. 

In order to reach the last face $W_x(h_2,g_2)$, the quantity 
$w_2^-(t_2) / \sqrt{m_2} - w_3^-(t_2) / \sqrt{m_3}$ is always uniformly bounded 
away from zero. Otherwise, due to Proposition \ref{prop4},
$\mathsf{P}_{x([t_0,t_4])}$ would be perpendicular to the face $W_x(g_2,g_3)$ and, thus, 
never reach the last face $W_x(h_2,g_2)$.

Due to Proposition \ref{prop4}, $w_1^-(t_1) / \sqrt{m_1} - w_2^-(t_1) / \sqrt{m_2}$ is arbitrarily close 
to zero if and only if the planar subspace $\mathsf{P}_{x([t_0,t_4])}$ is 
almost perpendicular to the face $W_x(g_1,g_2)$. But this implies that 
$w_1^-(t_3) / \sqrt{m_1} - w_2^-(t_3) / \sqrt{m_2}$ is uniformly bounded away from zero.

If $w_1^-(t_3) / \sqrt{m_1} - w_2^-(t_3) / \sqrt{m_2}$ is arbitrarily 
close to zero, then by the same reasoning as above, 
$w_1^-(t_1) / \sqrt{m_1} - w_2^-(t_1) / \sqrt{m_2}$ is uniformly bounded away from zero.
Thus, in case II., either (\ref{first}) or (\ref{second}) are always uniformly 
bounded away from zero.

It is clear, due to the coordinate transformation (\ref{localcoordinates}), that 
$w_i / \sqrt{m_i} - w_{i+1} / \sqrt{m_{i+1}}$ is uniformly bounded from below if and only if 
$v_i - v_{i+1}$ is uniformly bounded from below.

Consider the FB system in $x = (\xi,\eta)$ coordinates.
Along every orbit $(T^nx)_{n \in \mathbb{N}}$ we have obtained two subsequences
$(T^{n_{2k}}x)_{k \in \mathbb{N}}$ and $(T^{n_{2k+1}}x)_{k \in \mathbb{N}}$, where we set
$(T^{n_{2k}}x)_{k \in \mathbb{N}}$ to be the phase points before- and 
$(T^{n_{2k+1}}x)_{k \in \mathbb{N}}$ right after, two consecutive collisions with velocity differences
bounded away from zero. This means, that the derivative map $dT^{n_{2k-1}-n_{2k-2}}$ equals either 
$d\Phi_{2,3}d\Phi_{1,2}$ or $d\Phi_{1,2}d\Phi_{2,3}$. Both of the latter maps are upper triangular 
matrices of the form 
\begin{align}
\begin{pmatrix}
X_1 & X_2\\
0 & X_1^T
\end{pmatrix}.\nonumber
\end{align}
$X_1$ depends only on the masses, while $X_2 = X_2(\alpha_1,\alpha_2)$ depends on the masses 
and the velocity differences $v_i - v_{i+1}$ in $\alpha_1,\alpha_2$ (see (\ref{alpha})).
Each pair of consecutive collisions with velocity differences bounded away from 
zero belongs to one of the cases I-IV (\ref{cases}). Each of these velocity differences has a 
uniform lower bound. Set the minimum of these lower bounds to be $\Theta > 0$. Observe, that
\begin{align}
Q(d_{T^{n_{2k-2}}x}T^{n_{2k-1}-n_{2k-2}}(0,\delta \eta)) = 
\langle X_2\frac{1}{\|(0,\delta \eta)\|}\delta \eta, 
X_1^T \frac{1}{\|(0,\delta \eta)\|} \delta \eta \rangle
\|(0,\delta \eta)\|^2.\nonumber
\end{align}
Let $X_2(\Theta)$ be the matrix in which the velocity differences in $X_2(\alpha_1,\alpha_2)$ are 
replaced by $\Theta$. Since $X_1(X_2(\alpha_1,\alpha_2) - X_2(\Theta))$ is positive semi-definite, we have
\begin{align}
\langle X_2(\alpha_1,\alpha_2)\frac{1}{\|(0,\delta \eta)\|}\delta \eta, 
X_1^T \frac{1}{\|(0,\delta \eta)\|} \delta \eta \rangle
> \langle X_2(\Theta)\frac{1}{\|(0,\delta \eta)\|}\delta \eta, 
X_1^T \frac{1}{\|(0,\delta \eta)\|} \delta \eta \rangle.\nonumber
\end{align}
Denote by $\partial B_{\|\cdot\|}(\vec{0},1)$ the boundary of the unit ball in tangent space 
with respect to the indicated norm.
The functional $f(u) = \langle X_2(\Theta)u, X_1^Tu\rangle$ is positive, independent 
of $x$ and continuous on the compact space $\partial B_{\|\cdot\|}(0,1)$. Thus, there exists a 
constant $\Lambda_1 > 0$ such that
\begin{align}\label{same}
Q(d_{T^{n_{2k-2}}x}T^{n_{2k-1}-n_{2k-2}}(0,\delta \eta)) > \Lambda_1 \|(0,\delta \eta)\|^2.
\end{align}
The special case $O_2$ reduces to the analysis of the reappearing collision sequence
\begin{align}
(1,2) \longrightarrow (0,1) \longrightarrow (2,3) \longrightarrow (0,1) \longrightarrow (1,2).\nonumber
\end{align}
Using the Hamiltonian flow, the collision laws and the collision times, it can be quickly calculated that 
both velocity differences of the $(1,2)$ collisions can not become arbitrarily small, otherwise this 
state would leave the constant energy surface. Hence, one of them has a uniform lower bound. The same idea can 
be applied to obtain a uniform lower velocity difference bound of the $(2,3)$ collision, i.e. we observe a 
loss of energy when sufficiently reducing the value of the velocity differences of $(2,3)$ above and its 
successive $(2,3)$ collision.
For orbits in $O_2$, $dT^{n_{2k-1}-n_{2k-2}}$ either takes the form 
$d\Phi_{1,2}d\Phi_{0,1}d\Phi_{2,3}$ or $d\Phi_{2,3}d\Phi_{0,1}d\Phi_{1,2}$. 
Considering that $d\Phi_{0,1}$ and $d\Phi_{2,3}$ commute, the invariance of $L_2$ under $d\Phi_{0,1}$ and 
the $Q$-monotonicity, we obtain the same estimate (\ref{same}) with a different $\Lambda_2 > 0$.
Setting $\Lambda = \min\{\Lambda_1,\Lambda_2\}$ finishes the proof of Theorem \ref{thm2} and therefore also 
Theorem \ref{thm1}. 
\end{proof}

As it was outlined in Section 2, the strict unboundedness for every orbit subsequently implies the 
Chernov-Sinai ansatz and the abundance of sufficiently expanding points.

\subsection*{Acknowledgements}
I wish to cordially thank Maciej P. Wojtkowski for his help in 
outlining the difficulties of the problem and his hospitality during my visit to Opole in October 2017. 
Further, my gratitude is expressed to my advisor Henk Bruin and P\'eter B\'alint, Nandor Sim\'anyi, 
Domokos Sz\'asz and Imre P\'eter T\'oth for many helpful discussions. 
The author was supported by a Marietta-Blau and Marshall Plan Scholarship.

\end{document}